\documentclass[11pt]{amsart}

\usepackage[T1]{fontenc}
\usepackage{lmodern}
\usepackage{microtype}
\usepackage{amsmath,amssymb,amsthm,mathtools}
\usepackage{enumitem}
\usepackage[colorlinks=true,linkcolor=blue,citecolor=blue,urlcolor=blue]{hyperref}
\usepackage[nameinlink,noabbrev]{cleveref}

\allowdisplaybreaks
\numberwithin{equation}{section}

\newtheorem{theorem}{Theorem}[section]
\newtheorem{proposition}[theorem]{Proposition}
\newtheorem{lemma}[theorem]{Lemma}
\newtheorem{corollary}[theorem]{Corollary}
\newtheorem{remark}[theorem]{Remark}
\newtheorem{definition}[theorem]{Definition}

\newcommand{\R}{\mathbb{R}}
\newcommand{\C}{\mathbb{C}}
\newcommand{\T}{\mathbb{T}}
\newcommand{\dd}{\,\mathrm{d}}
\newcommand{\sgn}{\operatorname{sgn}}
\newcommand{\cstar}{c_{\star}}
\newcommand{\Aclass}{\mathcal{A}}

\title[An Improved Bound for the Ovals Problem]
{An Improved Bound for the Ovals Problem}

\author[D. Suragan]{Durvudkhan Suragan}
\address{
	Durvudkhan Suragan:
	\endgraf
	Department of Mathematics
	\endgraf
	Nazarbayev University
	\endgraf
	Astana 010000
	\endgraf
	Kazakhstan
	\endgraf
	{\it E-mail address:} {\rm  durvudkhan.suragan@nu.edu.kz}
}

\subjclass[2020]{Primary 35P15, 81Q10; Secondary 53A04, 26D10}
\keywords{Ovals problem, curvature Schr\"odinger operator, Lieb--Thirring inequality, lowest eigenvalue, orthonormal functions.}

\begin{document}

\begin{abstract}
Let $\gamma\subset\R^m,\,m\geq2,$ be a closed curve of length $2\pi$ with its curvature $\kappa$,
parametrized by arc length, and let $\lambda_\gamma$ be the first eigenvalue of
the periodic curvature Schr\"odinger operator
$-\dd^2/\dd s^2+\kappa(s)^2$. We obtain
\[
 \lambda_\gamma\geq \frac{\sqrt{\pi}}{2}
 \left(\frac{\Gamma(7/6)}{\Gamma(5/3)}\right)^3.
\]
This is a near-sharp lower bound for the Ovals problem.  
Our proof introduces a new geometric approach.  
We derive a convolution identity from the closure condition and combine it with projection averaging over tangent directions and sharp Poincar\'e  inequalities on antipodal arcs.
 As applications, we provide an improved two-state kinetic
Lieb--Thirring inequality and the corresponding two-eigenvalue constant.
\end{abstract}

\maketitle

\section{Introduction and main results}

Let $\gamma$ be a closed $W^{2,2}$ curve of length $2\pi$, parametrized by arc length, and let
$\kappa$ denote its curvature.  The associated periodic curvature Schr\"odinger
operator is
\begin{equation}\label{eq:oval-eigenvalue}
 H_\gamma:=-\frac{\dd^2}{\dd s^2}+\kappa(s)^2,
 \qquad
 \lambda_\gamma:=\inf_{0\neq\psi\in H^1(\T_{2\pi})}
 \frac{\displaystyle\int_0^{2\pi}
 \bigl(|\psi'|^2+\kappa^2|\psi|^2\bigr)\dd s}
 {\displaystyle\int_0^{2\pi}|\psi|^2\dd s},
\end{equation}
where  $ \lambda_\gamma$ is its first (least) eigenvalue and $\T_{2\pi}:=\R/2\pi\mathbb Z$.  For the class $\mathfrak C$ of smooth, simple,
closed, convex plane curves of length $2\pi$, set
\begin{equation}\label{eq:C-oval-def}
 C_{\mathrm{oval}}:=\inf_{\gamma\in\mathfrak C}\lambda_\gamma.
\end{equation}
The circle gives the upper bound $C_{\mathrm{oval}}\leq1$. The Ovals conjecture of Benguria and Loss \cite{BenguriaLoss2004} asserts
that $1$ is also a universal lower bound.  This is the quantity denoted by $C_{6.19}$ in Problem~6.19 (Ovals problem) of
\cite{GeorgievGomezSerranoTaoWagner2025}, where bounds as strong as possible
are requested.

Besides the circle, a family $\mathcal O$ of geometrically distinct convex ovals,
interpolating between the circle and a line segment traversed twice,
also has ground-state energy one
\cite{BenguriaLoss2004,BernsteinMettler2015}. The spectral quantity in
\eqref{eq:oval-eigenvalue} had already appeared in Burchard and Thomas's study
of dynamical Euler elastica, where 
$\lambda_\gamma\geq\frac14$ was proved \cite{BurchardThomas2003}.
Benguria and Loss subsequently formulated the Ovals conjecture $C_{\mathrm{oval}}=1$, exposed its
connection with the two-bound-state Lieb--Thirring problem, and established
$\lambda_\gamma\geq\frac12$ \cite{BenguriaLoss2004}.  Linde then improved
the universal lower bound to
\begin{equation}\label{eq:Linde-2006-bound}
 \lambda_\gamma>
 \left(1+\frac{\pi}{\pi+8}\right)^{-2}
 =0.6084773421\ldots
\end{equation}
in \cite{Linde2006}.  More recently, Linde established the analytic estimate
$\lambda_\gamma>0.81$ and a finite-dimensional variational bound whose numerical
evaluation gives $\lambda_\gamma>0.8246$ \cite{Linde2025}.   Burchard and Thomas
proved that the family $\mathcal O$ consists of local minimizers
\cite{BurchardThomas2005}, and Bernstein and Mettler characterized it through
projective differential geometry \cite{BernsteinMettler2015}. Denzler introduced a relaxed variational problem for which
compactness is restored, proved that minimizers exist, and showed that, with the
possible exception of the di-gon, every minimizer is represented by a planar,
real-analytic, strictly convex loop with strictly positive curvature
\cite{Denzler2015}.
The 2006 AIM workshop summary \cite{AIM2006}, the 2009 Oberwolfach report \cite{AshbaughBenguriaLaugesenWeidl2009}, and the survey
\cite{BenguriaLindeLoewe2012}
record further approaches to the Ovals problem.  The same problem was
recently used as a numerical benchmark in the work of Georgiev, G\'omez-Serrano, Tao, and Wagner
\cite[Section 8]{GeorgievGomezSerranoTaoWagner2025}; that computation supplied
no new rigorous lower bound. To the best of our knowledge, the sharp
lower bound in \eqref{eq:C-oval-def} therefore remains open. 

The main result of this paper is the following universal lower bound 
\begin{equation}\label{eq:cstar-def}
\lambda_\gamma \geq \cstar:=\frac{\sqrt{\pi}}{2}
 \left(\frac{\Gamma(7/6)}{\Gamma(5/3)}\right)^3
 =0.9618316523\ldots,
\end{equation}
where $\Gamma$ is the gamma function.
The approach developed here is different from previous analyses of the Ovals problem. Rather than studying the Euler--Lagrange equation or perturbations around minimizers, we derive a geometric convolution identity from the closure condition, combine it with an averaging argument over tangent directions, and reduce the problem to a H\"older estimate. This yields a universal lower bound for the curvature Schr\"odinger operator and, through a Bloch-vector construction, corresponding improvements for the two-state Lieb--Thirring inequality.

In fact, we prove a stronger, dimension-independent statement in Denzler's
harmonic coordinates \cite{Denzler2015}.  For $\Psi\in H^1(\T_{2\pi};\R^m)$, let
\[
 Z[\Psi]:=\{s\in\T_{2\pi}:\Psi(s)=0\},
 \qquad
 \sgn\Psi:=
 \begin{cases}
  \Psi/|\Psi|,&\Psi\neq0,\\
  0,&\Psi=0.
 \end{cases}
\]
\begin{theorem}\label{thm:relaxed-main}
Let $\Psi\in H^1(\T_{2\pi};\R^m)$, $m\geq2$.  Assume 
\begin{equation}\label{eq:weak-loop-intro}
 \left|\int_0^{2\pi}\sgn\Psi(s)\dd s\right|
 \leq |Z[\Psi]|,
\end{equation}
where $|Z[\Psi]|$ is the one-dimensional Lebesgue measure of the zero set.  Then
\begin{equation}\label{eq:relaxed-main}
 \int_0^{2\pi}|\Psi'(s)|^2\dd s
 \geq \cstar\int_0^{2\pi}|\Psi(s)|^2\dd s.
\end{equation}
\end{theorem}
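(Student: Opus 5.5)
The plan is to reduce the vector inequality \eqref{eq:relaxed-main} to a family of scalar Poincar\'e-type inequalities, one for each direction, and to use the closure condition \eqref{eq:weak-loop-intro} to show that each scalar projection must change sign often enough.

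\emph{Projection averaging.} By homogeneity assume $\int_0^{2\pi}|\Psi|^2\dd s=1$. For $e\in S^{m-1}$ put $f_e:=\langle\Psi,e\rangle$. Averaging over the normalized surface measure $\sigma$ gives
\[
\int_{S^{m-1}}\!\int_0^{2\pi}|f_e'|^2\dd s\dd\sigma(e)=\frac1m\int_0^{2\pi}|\Psi'|^2\dd s,
\qquad
\int_{S^{m-1}}\!\int_0^{2\pi}|f_e|^2\dd s\dd\sigma(e)=\frac1m .
\]
So it suffices to prove $\int|f_e'|^2\geq \cstar\int|f_e|^2$ in an averaged sense over $e$. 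A pointwise-in-$e$ bound may fail, so the averaging is an essential part of the argument. The structural information that makes the averaged bound possible comes from the closure condition.

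\emph{Convolution identity.} Set $u:=\sgn\Psi$, and define the autocorrelation $G(t):=\int_0^{2\pi}\langle u(s),u(s+t)\rangle\dd s$. Expanding the square gives
\[
\int_0^{2\pi}G(t)\dd t=\Bigl|\int_0^{2\pi}u\Bigr|^2\leq |Z[\Psi]|^2 .
\]
Also $G(0)=2\pi-|Z[\Psi]|$ and $|G|\leq 2\pi-|Z[\Psi]|$. Hence the set of pairs $(s,s+t)$ on which $u(s)$ and $u(s+t)$ make an obtuse angle carries a quantitatively large measure. For such a pair, the directions $e$ with $f_e(s)f_e(s+t)<0$ form a spherical region whose $\sigma$-measure equals the normalized angle between $u(s)$ and $u(s+t)$. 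Therefore $f_e$ must vanish somewhere on the arc $[s,s+t]$ and also somewhere on the complementary arc. I would organize this via a layer-cake or co-area decomposition in the gap variable $t$. The outcome should be that, for a.e. $e$, the zero set of $f_e$ splits the circle into arcs, and the distribution of arc lengths, averaged over $e$, is controlled by $G$.

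\emph{Poincar\'e on antipodal arcs and H\"older.} On an arc of length $L$ on which $f_e$ vanishes at both ends, the sharp Dirichlet Poincar\'e inequality gives $\int|f'|^2\geq(\pi/L)^2\int|f|^2$. The antipodal structure suggests a refined version in which the vanishing points are paired at distance roughly $\pi$. Summing over arcs and averaging over $e$ yields an inequality of the form
\[
\int|\Psi'|^2\geq\int w\,|\Psi|^2,
\]
where the weight $w$ is expressed through the correlation data. The constant is then obtained by a H\"older inequality against the constraint from the convolution identity. The specific Gamma ratio $\Gamma(7/6)/\Gamma(5/3)$ suggests exponents $3$ and $3/2$, together with Beta integrals of the form $\int_0^{\pi}\sin^{1/3}\theta\dd\theta$. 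These should arise when the worst-case angular distribution is optimized, equivalently when the resulting one-dimensional variational problem for the distribution of antipodal gaps is solved.

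\emph{Main obstacle.} I expect the hardest step to be the passage from the convolution identity to a usable lower bound on the averaged weight $w$ that is sharp enough to produce exactly $\cstar$. Concretely, this requires identifying the extremal distribution of angles between $u(s)$ and $u(s+t)$ compatible with $\int G\leq|Z|^2$, and proving that the H\"older step is tight there. My first attempt would be to reduce this to a rearrangement argument, making the gap distribution symmetric decreasing, followed by an explicit one-variable Euler--Lagrange computation whose solution yields the Gamma expression. The zero set $Z[\Psi]$ should only help, since $f_e$ vanishes there for every $e$.
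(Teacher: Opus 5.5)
There is a genuine gap, and it sits at the step you call the main obstacle. Poincar\'e on the zero-free arcs of $f_e$ followed by averaging gives $\int|\Psi'|^2\geq\int w\,|\Psi|^2$ with $w(s)=m\int_{S^{m-1}}\bigl(\pi/L_e(s)\bigr)^2\langle\sgn\Psi(s),e\rangle^2\dd e$. Here $\dd e$ is normalized surface measure and $L_e(s)$ is the length of the zero-free arc of $f_e$ through $s$. The weight $w$ is determined by $\sgn\Psi$ alone, and it gives no control over where $|\Psi|^2$ concentrates. So you need $w\geq\cstar$ on $\{\Psi\neq0\}$, that is, control of $L_e(s)$ at each $s$. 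The autocorrelation bound $\int G=|\int\sgn\Psi|^2\leq|Z[\Psi]|^2$ is a single global scalar inequality and cannot supply this. Knowing that $f_e$ changes sign between $s$ and $s+t$ for a set of $e$ of measure proportional to the angle shows only that zeros exist; it says nothing about how long the arcs around a given $s$ are. For a general element of $\Aclass_m$ (non-planar, with $\sgn\Psi$ wandering non-monotonically on the sphere) no such pointwise structure is available, and your layer-cake step stops at ``the outcome should be''. Your remark that $Z[\Psi]$ ``only helps'' also misses that zeros relax the constraint $|\int\sgn\Psi|\leq|Z[\Psi]|$. The paper handles this trade-off only through Denzler's classification: the di-gon is computed directly and the $D$-shaped case is excluded.

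The missing idea is the reduction in Proposition~\ref{prop:Denzler-reduction}. By Denzler's existence and classification theory, $\mu_m$ is attained either by the di-gon, whose quotient is exactly $1$, or by $\Psi=\varphi T$ with $\varphi>0$ and $T$ the tangent of a planar strictly convex curve with $\kappa>0$. Only in that setting do you get the rigid structure you need:
\begin{itemize}
\item the tangent angle $\phi$ is strictly increasing;
\item each projection $\Psi\cdot\nu_\alpha=\varphi\sin(\phi-\alpha)$ vanishes exactly at the two points with tangent $\pm e^{i\alpha}$, cutting the circle into arcs of lengths $\ell(\alpha)$ and $2\pi-\ell(\alpha)$;
\item your $w$ becomes $2\pi J(\phi(s))$ with $J(\beta)=\int_0^\pi\sin^2t\,\ell(\beta-t)^{-2}\dd t$.
\end{itemize}
The closure condition, written in the tangent-angle variable as $\int_0^{2\pi}g(\theta)e^{i\theta}\dd\theta=0$ with $g(\theta)=1/\kappa(\sigma(\theta))$, gives $\int_0^\pi\ell(\beta-t)\sin t\dd t=2\pi$ for \emph{every} $\beta$ (Lemma~\ref{lem:closure-convolution}). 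This pointwise family of constraints is what produces a pointwise bound on the weight.

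Finally, $\cstar$ is not the value of an extremal problem at which H\"older is tight, so the rearrangement and Euler--Lagrange program aims at the wrong target. One simply writes $\sin^{4/3}t=(\sin^2t/\ell^2)^{1/3}(\ell\sin t)^{2/3}$ and applies H\"older with exponents $3$ and $3/2$. This gives $J(\beta)\geq A^3/(2\pi)^2$ with $A=\int_0^\pi\sin^{4/3}t\dd t=\sqrt\pi\,\Gamma(7/6)/\Gamma(5/3)$, not $\int\sin^{1/3}$. Equality would force $\ell(\beta-t)\propto\sin^{1/3}t$ on $(0,\pi)$. By continuity of $\ell$ that gives $\ell(\beta)=\ell(\beta-\pi)=0$, contradicting $\ell(\alpha)+\ell(\alpha+\pi)=2\pi$. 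The H\"older step is simply lossy.
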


The lower bound follows immediately and applies to a larger class than
that appearing in \eqref{eq:C-oval-def}.

\begin{corollary}\label{cor:curve-main}
Let $\gamma\subset\R^m,\,m\geq2,$ be a closed $W^{2,2}$ curve of length $2\pi$,
parametrized by arc length, and let $\kappa=|\gamma''|$ be its curvature.  Then
\begin{equation}\label{eq:curve-main}
 \lambda_\gamma\geq\cstar.
\end{equation}
In particular, $C_{\mathrm{oval}}\geq\cstar$.
\end{corollary}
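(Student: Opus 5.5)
The corollary follows from Theorem~\ref{thm:relaxed-main} via Denzler's harmonic-coordinate construction, i.e.\ the Bloch-vector/spinor lift of a real eigenfunction. I will first reduce to real $\psi$: since $\kappa^2$ is real, $|\psi|$ has no larger Rayleigh quotient than $\psi$ in \eqref{eq:oval-eigenvalue}, so it suffices to show
\[
\int_0^{2\pi}\bigl(\psi'^2+\kappa^2\psi^2\bigr)\dd s\geq\cstar\int_0^{2\pi}\psi^2\dd s
\]
for every real $\psi\in H^1(\T_{2\pi})$. Let $T=\gamma'\colon\T_{2\pi}\to S^{m-1}$ be the unit tangent, which lies in $W^{1,2}$, so that $|T'|=\kappa$ a.e. I then set
\[
\Psi(s):=\psi(s)\,T(s)\in\R^m .
\]
Since $\psi\in H^1\subset L^\infty$ and $T\in H^1$, the product $\Psi$ lies in $H^1(\T_{2\pi};\R^m)$.

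Two computations finish the argument. First, $|\Psi|^2=\psi^2$. Second, differentiating $|T|^2=1$ gives $T\cdot T'=0$, and therefore
\[
|\Psi'|^2=|\psi' T+\psi T'|^2=\psi'^2+\psi^2\kappa^2 .
\]
Hence the Rayleigh quotient of $\psi$ for $H_\gamma$ equals $\int|\Psi'|^2/\int|\Psi|^2$, and \eqref{eq:relaxed-main} gives the bound, provided $\Psi$ satisfies \eqref{eq:weak-loop-intro}.

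That hypothesis is the only point needing care. On $\{\psi\neq0\}$ we have $\sgn\Psi=\sgn(\psi)\,T$, and $Z[\Psi]=Z[\psi]$. If $\psi>0$ everywhere, which is the case for the ground state, then $\sgn\Psi=T$ and
\[
\int_0^{2\pi}\sgn\Psi\dd s=\int_0^{2\pi}\gamma'\dd s=0
\]
by closedness of $\gamma$, so \eqref{eq:weak-loop-intro} holds trivially. To get into this case I will apply the argument to the ground state $\psi_0$ of $H_\gamma$ on $\T_{2\pi}$. Because $\kappa^2\in L^1$, the form is closed with compact resolvent, so a minimizer of \eqref{eq:oval-eigenvalue} exists. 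Replacing it by its absolute value, it is nonnegative, and it is strictly positive by the Harnack inequality, or simply because a nonnegative $H^1$ solution of $-\psi''+\kappa^2\psi=\lambda\psi$ with an interior zero satisfies $\psi=\psi'=0$ there and so vanishes identically by ODE uniqueness (the ODE has an $L^1$ coefficient). Thus $\lambda_\gamma$ equals the Rayleigh quotient of $\psi_0>0$, and that quotient is at least $\cstar$.

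Finally, the class $\mathfrak C$ in \eqref{eq:C-oval-def} is contained in the class of closed $W^{2,2}$ curves, which gives $C_{\mathrm{oval}}\geq\cstar$. The main technical obstacle is only the regularity bookkeeping: that $\Psi\in H^1$, and that $\kappa=|T'|$ is in $L^2$, which makes the form domain exactly $H^1$. No new estimate is needed, since the analytic content all sits in Theorem~\ref{thm:relaxed-main}.
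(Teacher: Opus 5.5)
Your proposal is correct and is essentially the paper's own argument: you take a strictly positive ground state $\psi$, set $\Psi=\psi\gamma'$, observe that $Z[\Psi]=\varnothing$ and $\int_0^{2\pi}\sgn\Psi\dd s=\int_0^{2\pi}\gamma'\dd s=0$, use $\gamma'\cdot\gamma''=0$ to get $|\Psi'|^2=|\psi'|^2+\kappa^2\psi^2$, and apply Theorem~\ref{thm:relaxed-main}. Your justification that the ground state exists and is strictly positive (compact resolvent, then ODE uniqueness with the $L^1$ coefficient $\kappa^2$) fills in a step the paper only asserts; the opening reduction to arbitrary real $\psi$ and the ``Bloch-vector/spinor lift'' label are unneeded, since only the ground state is used and the construction is simply $\Psi=\psi T$.
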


We briefly describe the mechanism behind the estimate.  For a smooth strictly
convex plane curve, write its unit tangent as
$T(s)=(\cos\phi(s),\sin\phi(s))$ and introduce the vector ground-state transform
$Y=\psi T$.  For each direction $\alpha$, the scalar projection
$Y\cdot(-\sin\alpha,\cos\alpha)$ vanishes at the two points at which the tangent
directions are antipodal.  Applying the sharp Dirichlet Poincar\'e inequality on
the two complementary arcs and averaging in $\alpha$ produces an exact weighted
estimate involving their antipodal half-length $\ell(\alpha)$.  Geometric closure
implies the convolution identity
\begin{equation*}
 \int_0^\pi \ell(\beta-t)\sin t\dd t=2\pi,
\end{equation*}
and  H\"older's inequality then yields \eqref{eq:cstar-def}.
All the geometric and averaging identities entering this chain are exact;
the two inequalities are the Poincar\'e step and the H\"older step (see Remark \ref{rem:loss}).
Denzler's existence and regularity theory reduces the relaxed variational
problem to this convex planar case or to the di-gon, whose quotient is one.

The Ovals problem was introduced in connection with the one-dimensional
Lieb--Thirring conjecture.  Suppose $u_1,\ldots,u_N\in H^1(\R;\C)$ are orthonormal. Set
\begin{equation*}\label{eq:density-intro}
 \rho(x):=\sum_{j=1}^N|u_j(x)|^2.
\end{equation*}
Let $K_N$ be the optimal constant in
\begin{equation*}\label{eq:kinetic-LT-intro}
 \sum_{j=1}^N\int_\R|u_j'(x)|^2\dd x
 \geq K_N\int_\R\rho(x)^3\dd x.
\end{equation*}
Keller's sharp one-bound-state inequality, equivalently the sharp
one-dimensional Gagliardo--Nirenberg inequality, gives $K_1=\frac{\pi^2}{4}$;
see \cite{Keller1961,Weinstein1983}.  The Lieb--Thirring conjecture predicts $K_N=\pi^2/4$ for every $N$.  For background and the finite-rank
variational theory, see
\cite{LiebThirring1976,Frank2021,FrankLaptevWeidl2022,DolbeaultFelmerLossPaturel2006,FrankGontierLewin2021,FrankGontierLewin2025}.
Classical one-dimensional developments include the Aizenman--Lieb lifting
principle \cite{AizenmanLieb1978}, the kinetic argument of Eden and Foias \cite{EdenFoias1991}, Weidl's endpoint result \cite{Weidl1996}, and
the sharp endpoint theorem of Hundertmark, Lieb, and Thomas
\cite{HundertmarkLiebThomas1998}.
Benguria and Loss showed that the sharp Ovals inequality would imply the
conjectured two-state estimate for real-valued functions
\cite{BenguriaLoss2004}.  Their reduction carries an additional endpoint
condition, so the geometric problem is, a priori, stronger than the two-state
problem.

Theorem \ref{thm:relaxed-main} gives the following two-state kinetic Lieb--Thirring inequality.

\begin{theorem}\label{thm:two-state-main}
Let $u_1,u_2\in H^1(\R;\C)$ be orthonormal in $L^2(\R)$.  Then
\begin{equation}\label{eq:two-state-main}
 \int_\R\bigl(|u_1'|^2+|u_2'|^2\bigr)\dd x
 \geq \cstar\frac{\pi^2}{4}
 \int_\R\bigl(|u_1|^2+|u_2|^2\bigr)^3\dd x.
\end{equation}
\end{theorem}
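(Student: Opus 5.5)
We reduce Theorem~\ref{thm:two-state-main} to Theorem~\ref{thm:relaxed-main} through a Bloch-vector construction, following the Benguria--Loss reduction and its relaxation in Denzler's coordinates. By density we may take $u_1,u_2$ smooth and compactly supported, or at least decaying, and we set $\rho=|u_1|^2+|u_2|^2$. On $\{\rho>0\}$ we introduce the new variable
\[
 s(x)=\pi\int_{-\infty}^x\rho(y)\dd y\in[0,2\pi],
\]
which is possible because $\int\rho=2$. The Bloch vector is
\[
 b(x)=\rho^{-1}\bigl(|u_1|^2-|u_2|^2,\ 2\operatorname{Re}(u_1\bar u_2),\ 2\operatorname{Im}(u_1\bar u_2)\bigr)\in S^2 .
\]
Together with $\rho$ it determines the pair up to a local $U(2)$ phase. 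We then define $\Psi(s)=\sqrt{\rho}^{\,?}\,b$ with a suitable power of $\rho$, the ground-state-transform scaling, so that $\Psi\in H^1(\T_{2\pi};\R^3)$. The scaling is fixed by requiring that the kinetic energy splits as
\[
 \int(|u_1'|^2+|u_2'|^2)\,\dd x \;\ge\; \int\frac{(\rho')^2}{4\rho}\,\dd x+\frac14\int\rho|b'|^2\,\dd x ,
\]
which is the Fubini--Study/gauge-minimization identity; the phase terms only add nonnegative energy. In the $s$-variable the natural choice is $\Psi=\rho^{-1/2}\cdots$, arranged as in Benguria--Loss so that $|\Psi|$ encodes $\rho$ and $\sgn\Psi=b$. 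The target is an inequality of the form $\int|\Psi'|^2\,\dd s\ge \cstar\int|\Psi|^2\,\dd s$, which translates back to $\sum\int|u_j'|^2\ge \cstar\frac{\pi^2}{4}\int\rho^3$. The constant $\pi^2/4$ is exactly Keller's, recovered when $b$ is constant, that is, in the rank-one case.

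We next check the closure hypothesis~\eqref{eq:weak-loop-intro}. Orthogonality $\int u_1\bar u_2=0$ together with normalization $\int|u_1|^2=\int|u_2|^2=1$ gives $\int\rho\, b\,\dd x=0$. Since $\dd s=\pi\rho\,\dd x$, this reads $\int_0^{2\pi}\sgn\Psi\,\dd s=0$, so~\eqref{eq:weak-loop-intro} holds trivially, even with $|Z[\Psi]|=0$. Here $m=3\ge 2$ is allowed by Theorem~\ref{thm:relaxed-main}. The endpoint condition that plagued Benguria--Loss is absorbed because Theorem~\ref{thm:relaxed-main} works on the periodic torus and allows $\Psi$ to vanish at $s=0\equiv 2\pi$, where $\rho\to0$.

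The main obstacle is finding the exact change of variables so that both $\int|\Psi'|^2$ and $\int|\Psi|^2$ match the two sides with the precise constant $\pi^2/4$. One can first attempt a Keller-type rescaling, in which the optimal one-state profile $\rho\propto\operatorname{sech}^2$ corresponds to $\Psi$ being a first harmonic. If a pointwise identity fails, the fallback is an inequality: apply Keller/Gagliardo--Nirenberg to the radial part and Theorem~\ref{thm:relaxed-main} to the angular part, then combine the two by a convexity (Cauchy--Schwarz) argument in which the constant $\cstar\le1$ multiplies the whole. A further point needs care: $\{\rho=0\}$ may be nonempty inside $\R$. We handle this either by perturbing to $\rho>0$ using the density argument above, or by noting that such points produce zeros of $\Psi$, which only enlarge $Z[\Psi]$ and so preserve~\eqref{eq:weak-loop-intro}.
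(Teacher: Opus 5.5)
Your setup matches the paper's. The Bloch vector, the variable $s=\pi\int_{-\infty}^x\rho$, the closure $\int_\R\rho\, b\,\dd x=0$, and your energy inequality $|u'|^2\ge\frac{(\rho')^2}{4\rho}+\frac14\rho|b'|^2$ are all correct. That inequality is exactly \eqref{eq:bloch-differential} rewritten through $|\mathbf b'|^2=(\rho')^2+\rho^2|b'|^2$ for $\mathbf b=\rho\, b$.

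The gap is that you never actually define $\Psi$, and your one concrete suggestion, $\Psi=\rho^{-1/2}\cdots$, fails. It blows up wherever $\rho\to0$, in particular near $s=0\equiv 2\pi$, so $\Psi\notin H^1(\T_{2\pi})$. Its $L^2$ norm is $\int_0^{2\pi}|\Psi|^2\dd s=\pi\,|\{\rho>0\}|$, which has nothing to do with $\int\rho^3$. It also contradicts your later remark that zeros of $\rho$ become zeros of $\Psi$.

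The fallback does not rescue the argument. Keller applied to $\sqrt\rho$ with $\|\sqrt\rho\|_2^2=2$ only yields $\int\frac{(\rho')^2}{4\rho}\ge\frac{\pi^2}{16}\int\rho^3$, roughly a factor $4$ too small. Theorem~\ref{thm:relaxed-main} applied to the unit vector $b$ alone contains no $\rho^3$ at all. The radial and angular energies cannot be bounded separately and recombined by convexity; the theorem has to see them jointly.

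The missing step is that the power is forced to be one. Take $\Psi(s):=\mathbf b(x(s))=\rho\, b$, the un-normalized Bloch vector, so that $|\Psi|=\rho$ and $\sgn\Psi=b$. Since $\dd s=\pi\rho\,\dd x$, one gets $\Psi_s=\mathbf b'/(\pi\rho)$ and
\[
 \int_0^{2\pi}|\Psi_s|^2\dd s=\frac1\pi\int_\R\Bigl(\frac{(\rho')^2}{\rho}+\rho|b'|^2\Bigr)\dd x\le\frac4\pi\int_\R|u'|^2\dd x .
\]
At the same time $\int_0^{2\pi}|\Psi|^2\dd s=\pi\int_\R\rho^3\dd x$. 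Theorem~\ref{thm:relaxed-main} then gives exactly $\int_\R|u'|^2\ge\cstar\frac{\pi^2}{4}\int_\R\rho^3$; this is the paper's argument in the variable $t=s/2\pi$.

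You would still need to justify $\Psi\in H^1(\T_{2\pi};\R^3)$ when $\{\rho=0\}$ is nonempty. Zero-density intervals collapse to points under $x\mapsto s$, so you must check vanishing traces at the ends of each component of $\{\rho>0\}$ and then glue the pieces. Your perturbation remark is not carried out; the paper handles this in Lemma~\ref{lem:mass-coordinate}.

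A minor correction: $\rho$ and $b$ determine $u$ up to a local $U(1)$ phase, not a $U(2)$ one.
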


Writing $u=(u_1,u_2)^{\mathsf T}$, the passage from a complex orthonormal pair to
a relaxed loop uses the Bloch vector
\begin{equation}\label{eq:bloch-intro}
 \mathbf b(x):=
 \begin{pmatrix}
  2\operatorname{Re}(\overline{u_1}u_2)\\
  2\operatorname{Im}(\overline{u_1}u_2)\\
  |u_1|^2-|u_2|^2
 \end{pmatrix}.
\end{equation}
It satisfies $|\mathbf b|=\rho$, $\int_\R\mathbf b=0$, and
\begin{equation}\label{eq:bloch-energy-intro}
 4\rho\bigl(|u_1'|^2+|u_2'|^2\bigr)-|\mathbf b'|^2
 =4\bigl(\operatorname{Im}\langle u,u'\rangle_{\C^2}\bigr)^2\geq0.
\end{equation}
After the cumulative-density change of variables, $\mathbf b$ becomes an
admissible $\R^3$-valued relaxed loop.  This both proves
Theorem \ref{thm:two-state-main} and explains why no reality assumption is needed.

By duality, we also obtain a bound for two negative eigenvalues.  For
$V\in L^{3/2}(\R)$ with $V\geq0$ almost everywhere, let
$\mu_1(V)\leq\mu_2(V)\leq\cdots$ denote the min--max values of
$-\dd^2/\dd x^2-V$ below the bottom of the essential spectrum, and set
\[
 \lambda_j(V):=\max\{-\mu_j(V),\,0\},
 \qquad j=1,2,
\]
so that $\lambda_1(V)\geq\lambda_2(V)\geq0$, with $\lambda_j(V)=0$ when the
$j$-th negative eigenvalue is absent.

\begin{corollary}\label{cor:two-eigenvalue}
For every $V\in L^{3/2}(\R)$, $V\geq0$,
\begin{equation}\label{eq:two-eigenvalue}
 \lambda_1(V)+\lambda_2(V)
 \leq \frac{4}{3\sqrt{3}\,\pi\sqrt{\cstar}}
 \int_\R V(x)^{3/2}\dd x.
\end{equation}
\end{corollary}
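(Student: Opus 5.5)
This follows from Theorem~\ref{thm:two-state-main} by the standard Legendre-type duality between the kinetic inequality and eigenvalue sums. Fix $V\ge0$ in $L^{3/2}(\R)$ and let $n\in\{0,1,2\}$ be the number of negative eigenvalues of $-\dd^2/\dd x^2-V$ that are counted in $\lambda_1(V)+\lambda_2(V)$. If $n=0$ there is nothing to prove. If $n=1$, we apply Keller's one-state inequality with $K_1=\pi^2/4\ge \cstar\pi^2/4$. This gives a stronger bound by the computation below, because $\cstar\le1$.

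Now suppose $n=2$. By the min--max principle we choose orthonormal eigenfunctions $u_1,u_2\in H^1(\R;\C)$ with eigenvalues $\mu_1,\mu_2$. Then
\[
 -(\lambda_1+\lambda_2)=\sum_{j=1}^2\int_\R\bigl(|u_j'|^2-V|u_j|^2\bigr)\dd x
 \geq \cstar\frac{\pi^2}{4}\int_\R\rho^3\dd x-\int_\R V\rho\dd x,
\]
where $\rho=|u_1|^2+|u_2|^2$ and the inequality is Theorem~\ref{thm:two-state-main}. If $\lambda_2=0$ because the second eigenvalue is absent, we use only $u_1$ and the case $n=1$.

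Next we minimize the right-hand side pointwise in $\rho\ge0$. With $K:=\cstar\pi^2/4$, the function $t\mapsto Kt^3-Vt$ attains its minimum at $t=\sqrt{V/(3K)}$, and the minimum value is $-\frac{2}{3}V^{3/2}(3K)^{-1/2}$. Hence
\[
 \lambda_1+\lambda_2\le \frac{2}{3\sqrt{3K}}\int_\R V^{3/2}\dd x
 =\frac{2}{3\sqrt3}\cdot\frac{2}{\pi\sqrt{\cstar}}\int_\R V^{3/2}\dd x,
\]
which is exactly \eqref{eq:two-eigenvalue}.

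The only delicate points are technical. First, $u_j\in H^1$ and the integrals $\int V|u_j|^2$ are finite, which holds since $V\in L^{3/2}$ is form-bounded with relative bound zero. Second, the eigenvalues are attained when they lie below the essential spectrum $[0,\infty)$. If one prefers to avoid existence questions, the same argument works with the min--max trial pair approximating $\mu_1,\mu_2$, followed by a limit. I expect no real obstacle beyond this bookkeeping.
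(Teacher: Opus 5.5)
Your proof is correct and follows essentially the paper's argument: apply Theorem~\ref{thm:two-state-main} to an orthonormal pair, then use the pointwise cubic Young bound $Kr^3-vr\geq-\frac{2}{3\sqrt{3K}}v^{3/2}$. The only difference is bookkeeping. The paper writes $\lambda_1(V)+\lambda_2(V)$ as a supremum over orthonormal pairs via the positive-part Ky Fan principle, which handles fewer than two negative eigenvalues uniformly, whereas you insert the actual eigenfunctions and treat the one-eigenvalue case with Keller's constant $K_1=\pi^2/4\geq\cstar\pi^2/4$.
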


The constant in \eqref{eq:two-eigenvalue} equals $0.2498\ldots$.  For comparison,
the Eden--Foias argument \cite{EdenFoias1991} bounds the full eigenvalue
sum by $\tfrac{2}{3\sqrt3}\int_\R V^{3/2}=0.3849\ldots\int_\R V^{3/2}$, while Keller's
one-bound-state constant, which the Lieb--Thirring conjecture
predicts for the full sum, is $\tfrac{4}{3\sqrt3\,\pi}=0.2450\ldots$.  

The paper is organized as follows.  In Section \ref{sec:geometry} we derive the exact
closure identity for the antipodal half-length.  The projection argument and the
convex planar estimate are given in Section \ref{sec:projection}.  In
Section \ref{sec:relaxed} we combine this estimate with Denzler's argument to prove Theorem \ref{thm:relaxed-main} and Corollary \ref{cor:curve-main}.  The
Bloch-vector reduction and Theorem \ref{thm:two-state-main} with Corollary \ref{cor:two-eigenvalue} are established in
Section \ref{sec:bloch}.  The
weighted cumulative-density lemma used in the reduction is recorded in
Appendix \ref{sec:mass-appendix}.

\section{Convex loops and the technical lemma}\label{sec:geometry}

Throughout this section, $\gamma$ is a smooth, strictly convex, closed plane curve of length $2\pi$, parametrized counterclockwise by arc length, and its curvature
is assumed to be everywhere positive:
\[
 \gamma:\T_{2\pi}\longrightarrow\R^2,
 \qquad |\gamma'(s)|=1.
\]
Write
\begin{equation}\label{eq:tangent-angle}
 T(s):=\gamma'(s)=\bigl(\cos\phi(s),\sin\phi(s)\bigr).
\end{equation}
The positive-curvature assumption permits a smooth lift of the tangent angle
satisfying
\begin{equation}\label{eq:phi-properties}
 \phi'(s)=\kappa(s)>0,
 \qquad
 \phi(s+2\pi)=\phi(s)+2\pi\nu,
\end{equation}
where $\nu\geq1$ is the turning number of $\gamma$.  Since a closed convex plane
curve is by definition the boundary of a convex body, it is simple and
$\nu=1$; this is the normalization used throughout, and it is the only case
needed, because the curves supplied by Proposition \ref{prop:Denzler-reduction}
are convex in this sense.  Thus \eqref{eq:phi-properties} reads
$\phi(s+2\pi)=\phi(s)+2\pi$.
Let $s=\sigma(\theta)$ denote the inverse map, normalized so that
\begin{equation}\label{eq:sigma-periodic}
 \sigma(\theta+2\pi)=\sigma(\theta)+2\pi.
\end{equation}
Set
\begin{equation}\label{eq:g-def}
 g(\theta):=\sigma'(\theta)
 =\frac1{\kappa(\sigma(\theta))}.
\end{equation}
Then $g$ is positive and $2\pi$-periodic, and
\begin{equation}\label{eq:g-mean}
 \int_0^{2\pi}g(\theta)\dd\theta=2\pi.
\end{equation}
The closure condition $\int_0^{2\pi}T(s)\dd s=0$ becomes
\begin{equation}\label{eq:g-closure}
 \int_0^{2\pi}e^{i\theta}g(\theta)\dd\theta=0.
\end{equation}
\begin{definition}\label{def:half-length}
For $\alpha\in\R$, define
\begin{equation}\label{eq:ell-def}
 \ell(\alpha)
 :=\sigma(\alpha+\pi)-\sigma(\alpha)
 =\int_\alpha^{\alpha+\pi}g(\theta)\dd\theta.
\end{equation}
Thus $\ell(\alpha)$ is the length of the oriented arc whose endpoint tangent directions are $e^{i\alpha}$ and $-e^{i\alpha}$.
\end{definition}

The complementary arc has length $\ell(\alpha+\pi)$, and hence
\begin{equation}\label{eq:ell-antipodal}
 \ell(\alpha)+\ell(\alpha+\pi)=2\pi.
\end{equation}
The next identity is the main geometric input.
\begin{lemma}\label{lem:closure-convolution}
Let $g\in L^1(\T_{2\pi})$ satisfy $g\geq0$, \eqref{eq:g-mean} and
\eqref{eq:g-closure}, and let $\ell$ be given by \eqref{eq:ell-def}.  Then, for
every $\beta\in\R$,
\begin{equation}\label{eq:closure-convolution-complex}
 \int_0^\pi\ell(\beta-t)\,e^{it}\dd t=2\pi i;
\end{equation}
equivalently,
\begin{equation}\label{eq:closure-convolution}
 \int_0^\pi\ell(\beta-t)\sin t\dd t=2\pi
 \quad\text{and}\quad
 \int_0^\pi\ell(\beta-t)\cos t\dd t=0.
\end{equation}
\end{lemma}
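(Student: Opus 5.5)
Since $\ell(\alpha)=\int_\alpha^{\alpha+\pi}g(\theta)\dd\theta$, the plan is to substitute this into the left side of \eqref{eq:closure-convolution-complex} and swap the order of integration. Then the $t$-integral can be done explicitly, and what remains involves only the two moment conditions \eqref{eq:g-mean} and \eqref{eq:g-closure}. Concretely, I write
\[
 \int_0^\pi\ell(\beta-t)e^{it}\dd t=\int_0^\pi\int_{\beta-t}^{\beta-t+\pi}g(\theta)\,e^{it}\dd\theta\dd t .
\]
Setting $\theta=\beta+u$, this becomes an integral over the region $\{(t,u):0\le t\le\pi,\ -t\le u\le \pi-t\}$, a parallelogram. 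Fubini applies because $g\in L^1$ and the integrand is bounded by $|g|$ on a bounded region.

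Next I integrate in $t$ first, for each fixed $u\in[-\pi,\pi]$. For $u\in[0,\pi]$ the constraints give $t\in[0,\pi-u]$, and for $u\in[-\pi,0]$ they give $t\in[-u,\pi]$. The explicit $t$-integrals are
\[
 \int_0^{\pi-u}e^{it}\dd t=\frac{e^{i(\pi-u)}-1}{i}=i\,(1+e^{-iu})
 \quad\text{and}\quad
 \int_{-u}^{\pi}e^{it}\dd t=\frac{-1-e^{-iu}}{i}=i\,(1+e^{-iu}).
\]
The two pieces give the same kernel $i(1+e^{-iu})$. Hence the whole expression equals
\[
 i\int_{-\pi}^{\pi}g(\beta+u)\bigl(1+e^{-iu}\bigr)\dd u .
\]
By periodicity of $g$, this is an integral over a full period.

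In the last step the constant term contributes $i\int_0^{2\pi}g=2\pi i$ by \eqref{eq:g-mean}. The oscillatory term is $i\,e^{i\beta}\int g(\theta)e^{-i\theta}\dd\theta$. This vanishes because it is the complex conjugate of \eqref{eq:g-closure}, and $g$ is real. That proves \eqref{eq:closure-convolution-complex}. Taking imaginary and real parts gives \eqref{eq:closure-convolution}, using that $\ell$ is real.

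The only place needing care is getting the parallelogram region and the two $t$-ranges right, together with their endpoint values. The fact that both halves yield the same kernel is what makes the identity exact. A sanity check with $g\equiv1$ gives $\ell\equiv\pi$ and $\pi\int_0^\pi e^{it}\dd t=2\pi i$, which agrees with the result.
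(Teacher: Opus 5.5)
Your proof is correct and follows the paper's argument: both use Fubini to move the $t$-integral inside, compute the kernel explicitly in two cases, and finish with \eqref{eq:g-mean} and the complex conjugate of \eqref{eq:g-closure}. The paper writes the kernel as $w(u)=i(1+e^{iu})$ with $u=\beta-\theta$, which is your $i(1+e^{-iu})$ under the opposite sign convention for $u$.
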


\begin{proof}
By \eqref{eq:ell-def},
$\ell(\beta-t)=\int_0^{2\pi}g(\theta)\,\mathbf 1_{E(\beta-t)}(\theta)\dd\theta$,
where $E(\alpha)\subset\T_{2\pi}$ is the arc from $\alpha$ to $\alpha+\pi$.
Since $g\geq0$ is integrable, Fubini's theorem gives
\begin{equation}\label{eq:closure-fubini}
 \int_0^\pi\ell(\beta-t)e^{it}\dd t
 =\int_0^{2\pi}g(\theta)\,w(\beta-\theta)\dd\theta,
 \quad
 w(u):=\int_{(0,\pi)\cap(u,u+\pi)}e^{it}\dd t,
\end{equation}
here the inner intersection being taken modulo $2\pi$. Indeed
$\theta\in E(\beta-t)$ is equivalent to $t\in(u,u+\pi)$ modulo $2\pi$, with
$u:=\beta-\theta$.

We claim that
\begin{equation}\label{eq:w-formula}
 w(u)=i\bigl(1+e^{iu}\bigr)
 \qquad\text{for every }u\in\R.
\end{equation}
By periodicity it suffices to check $u\in[0,2\pi)$.  If $u\in[0,\pi]$, then
$(0,\pi)\cap(u,u+\pi)=(u,\pi)$ and
\[
 w(u)=\int_u^\pi e^{it}\dd t
 =\frac{e^{i\pi}-e^{iu}}{i}
 =i\bigl(1+e^{iu}\bigr).
\]
If $u\in(\pi,2\pi)$, then $(u,u+\pi)$ meets $(0,\pi)$ in $(0,u-\pi)$ and
\[
 w(u)=\int_0^{u-\pi}e^{it}\dd t
 =\frac{e^{i(u-\pi)}-1}{i}
 =i\bigl(1+e^{iu}\bigr),
\]
which proves \eqref{eq:w-formula}.

Substituting \eqref{eq:w-formula} into \eqref{eq:closure-fubini} and using that
$g$ is real-valued,
\begin{align*}
 \int_0^\pi\ell(\beta-t)e^{it}\dd t
 &=i\int_0^{2\pi}g(\theta)\dd\theta
 +ie^{i\beta}\int_0^{2\pi}g(\theta)e^{-i\theta}\dd\theta\\
 &=2\pi i
 +ie^{i\beta}\,\overline{\int_0^{2\pi}g(\theta)e^{i\theta}\dd\theta},
\end{align*}
by \eqref{eq:g-mean}.  The remaining integral vanishes by the closure condition
\eqref{eq:g-closure}, which proves
\eqref{eq:closure-convolution-complex}; taking imaginary and real parts gives
\eqref{eq:closure-convolution}.
\end{proof}

\section{Projection averaging and the convex planar bound}\label{sec:projection}
For $\psi\in H^1(\T_{2\pi})$, define the vector-valued ground-state transform
\begin{equation}\label{eq:Y-def}
 Y(s):=\psi(s)T(s).
\end{equation}
Since $T$ is real-valued with $T\cdot T'=0$ and $|T'|=\kappa$, one has, for every
$\psi\in H^1(\T_{2\pi};\C)$, the pointwise identity
\begin{equation}\label{eq:Y-energy}
 |Y'|^2=|\psi'|^2+\kappa^2|\psi|^2.
\end{equation}

For $\alpha\in\R$, let
\begin{equation}\label{eq:nu-alpha}
 \nu_\alpha:=(-\sin\alpha,\cos\alpha).
\end{equation}
Recall that $\sigma$ is the inverse of the lifted tangent-angle map.  Put
\begin{equation}\label{eq:s-alpha}
 s_\alpha:=\sigma(\alpha)
 \quad\text{and}\quad
 I_\alpha:=\bigl\{s\in\T_{2\pi}:
 s_\alpha<s<s_{\alpha+\pi}\bigr\},
\end{equation}
where the inequality is understood on the positively oriented lift to $\R$.
Thus $I_\alpha$ is the open oriented arc starting at the unique point whose
tangent is $e^{i\alpha}$ and ending at the unique point whose tangent is
$-e^{i\alpha}$.  Its length is
\[
 |I_\alpha|=s_{\alpha+\pi}-s_\alpha=\ell(\alpha).
\]
The complementary open arc is $I_{\alpha+\pi}$ and has length
$\ell(\alpha+\pi)=2\pi-\ell(\alpha)$.

Consider the scalar projection
\begin{equation}\label{eq:f-alpha}
 f_\alpha(s):=Y(s)\cdot\nu_\alpha
 =\psi(s)\sin(\phi(s)-\alpha).
\end{equation}
Because $\phi(s_\alpha)=\alpha$ and
$\phi(s_{\alpha+\pi})=\alpha+\pi$, the trace of $f_\alpha$ vanishes at both
endpoints of $I_\alpha$ and also at both endpoints of the complementary arc
$I_{\alpha+\pi}$.

\begin{proposition}\label{prop:averaged-two-arc}
For every $\psi\in H^1(\T_{2\pi})$,
\begin{equation}\label{eq:averaged-two-arc}
 \int_0^{2\pi}\bigl(|\psi'|^2+\kappa^2|\psi|^2\bigr)\dd s
 \geq2\pi\int_0^{2\pi}J(\phi(s))|\psi(s)|^2\dd s,
\end{equation}
where
\begin{equation}\label{eq:J-def}
 J(\beta):=\int_0^\pi
 \frac{\sin^2t}{\ell(\beta-t)^2}\dd t.
\end{equation}
\end{proposition}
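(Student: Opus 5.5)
Fix a direction $\alpha$ and consider the scalar projection $f_\alpha=Y\cdot\nu_\alpha$ from \eqref{eq:f-alpha}. It vanishes at the endpoints of $I_\alpha$ and of $I_{\alpha+\pi}$, so on each arc it lies in $H^1_0$. The sharp Dirichlet Poincar\'e inequality on an interval of length $L$ has constant $\pi^2/L^2$. Applying it on both arcs gives
\[
 \int_0^{2\pi}|f_\alpha'|^2\dd s\geq \frac{\pi^2}{\ell(\alpha)^2}\int_{I_\alpha}|f_\alpha|^2\dd s+\frac{\pi^2}{\ell(\alpha+\pi)^2}\int_{I_{\alpha+\pi}}|f_\alpha|^2\dd s .
\]
Since $|f_\alpha'|=|Y'\cdot\nu_\alpha|$, averaging over $\alpha\in[0,2\pi)$ and using $\int_0^{2\pi}(Y'\cdot\nu_\alpha)^2\dd\alpha=\pi|Y'|^2$ turns the left side into $\pi\int|Y'|^2$. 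By \eqref{eq:Y-energy} this equals $\pi\int(|\psi'|^2+\kappa^2|\psi|^2)$. So the problem reduces to computing the averaged right-hand side exactly.

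For the right side, I write $|f_\alpha(s)|^2=|\psi(s)|^2\sin^2(\phi(s)-\alpha)$ and apply Fubini. The key point is to identify, for fixed $s$, the set of $\alpha$ with $s\in I_\alpha$. Because $\phi$ is increasing with $\phi(s+2\pi)=\phi(s)+2\pi$, we have $s\in I_\alpha$ iff $\alpha<\phi(s)<\alpha+\pi$ on the lift, i.e.\ $\alpha=\phi(s)-t$ with $t\in(0,\pi)$. Likewise $s\in I_{\alpha+\pi}$ iff $\alpha=\phi(s)-t$ with $t\in(-\pi,0)$, which is equivalent to $t\in(\pi,2\pi)$ mod $2\pi$.

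Substituting $\alpha=\phi(s)-t$, the averaged right side becomes
\[
 \pi^2\int_0^{2\pi}|\psi(s)|^2\Bigl(\int_0^\pi\frac{\sin^2t}{\ell(\phi(s)-t)^2}\dd t+\int_\pi^{2\pi}\frac{\sin^2t}{\ell(\phi(s)-t+\pi)^2}\dd t\Bigr)\dd s .
\]
In the second integral I substitute $t\mapsto t+\pi$, using $\sin^2(t+\pi)=\sin^2t$ and $\ell(\phi(s)-t-\pi+\pi)=\ell(\phi(s)-t)$. It then becomes identical to the first integral, so the bracket equals $2J(\phi(s))$.

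Comparing the two sides gives $\pi\int(|\psi'|^2+\kappa^2|\psi|^2)\geq 2\pi^2\int J(\phi)|\psi|^2$. Dividing by $\pi$ yields exactly \eqref{eq:averaged-two-arc}.

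The main obstacle is the bookkeeping in the Fubini step: getting the orientation of the arcs right on the lift, and checking that the complementary arc's length $\ell(\alpha+\pi)$ pairs correctly after the shift so that both contributions coincide. The remaining points are routine. Measurability of $\alpha\mapsto\ell(\alpha)$ holds because $\ell$ is continuous and positive. The traces of $f_\alpha$ vanish since $f_\alpha\in H^1$ is continuous. For complex $\psi$ one applies Poincar\'e to real and imaginary parts separately.
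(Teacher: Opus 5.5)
Your proposal is correct and follows the paper's proof essentially step for step. It uses the two-arc Dirichlet Poincar\'e inequality for $f_\alpha$, the exact angular average $\int_0^{2\pi}(Y'\cdot\nu_\alpha)^2\dd\alpha=\pi|Y'|^2$ on the left, and Fubini with $t=\phi(s)-\alpha$ on the right; the only cosmetic difference is that you match the complementary-arc term to $J(\phi(s))$ by shifting $t\mapsto t+\pi$ after Fubini, whereas the paper shifts $\alpha\mapsto\alpha+\pi$ beforehand to get $R_2=R_1$.
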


\begin{proof}
We first assume that $\psi$ is real-valued.  For a fixed $\alpha$, the
restrictions of $f_\alpha$ to $I_\alpha$ and $I_{\alpha+\pi}$ belong to the
corresponding Dirichlet Sobolev spaces because their endpoint traces vanish.
The sharp one-dimensional Dirichlet Poincar\'e inequality on an interval of
length $L$ is
\begin{equation}\label{eq:Dirichlet-Poincare}
 \int_0^L|h'(r)|^2\dd r
 \geq\frac{\pi^2}{L^2}\int_0^L|h(r)|^2\dd r,
 \qquad h\in H_0^1(0,L).
\end{equation}
Applying \eqref{eq:Dirichlet-Poincare} separately to the two arcs and adding the
results gives
\begin{align}
 \int_0^{2\pi}|f_\alpha'(s)|^2\dd s
 \geq \pi^2\bigg(&\frac1{\ell(\alpha)^2}
 \int_{I_\alpha}f_\alpha(s)^2\dd s\notag\\
 &+\frac1{\ell(\alpha+\pi)^2}
 \int_{I_{\alpha+\pi}}f_\alpha(s)^2\dd s\bigg).
 \label{eq:two-arc-poincare}
\end{align}
The endpoints have measure zero, so the two arc integrals on the left add to
the integral over the whole circle.

We now integrate \eqref{eq:two-arc-poincare} with respect to
$\alpha\in[0,2\pi]$.  Since $\nu_\alpha$ is independent of $s$,
$f_\alpha'(s)=Y'(s)\cdot\nu_\alpha$. Also, for every $v=(v_1,v_2)\in\R^2$, we have 
\begin{equation*}\label{eq:angular-average-vector}
 \int_0^{2\pi}|v\cdot\nu_\alpha|^2\dd\alpha
 =\int_0^{2\pi}|-v_1\sin\alpha+v_2\cos\alpha|^2\dd\alpha
 =\pi|v|^2.
\end{equation*}
Thus, the integral of the left-hand side of \eqref{eq:two-arc-poincare} gives
\begin{equation}\label{eq:average-left}
 \int_0^{2\pi}\int_0^{2\pi}|f_\alpha'(s)|^2\dd s\dd\alpha
 =\pi\int_0^{2\pi}|Y'(s)|^2\dd s.
\end{equation}

It remains to compute the integral of the right-hand side of \eqref{eq:two-arc-poincare}.  Consider first the contribution
of $I_\alpha$:
\begin{equation}\label{eq:R1-def}
 R_1:=\pi^2\int_0^{2\pi}\frac1{\ell(\alpha)^2}
 \int_{I_\alpha}f_\alpha(s)^2\dd s\dd\alpha.
\end{equation}
All integrands are nonnegative and integrable, so Fubini's theorem permits us to reverse the
order of integration.  Fix $s\in\T_{2\pi}$ and choose the lift
$\beta:=\phi(s)\in\R$.  By the strict monotonicity of $\phi$, the membership
condition $s\in I_\alpha$ is equivalent, modulo $2\pi$, to
\begin{equation}\label{eq:arc-membership}
 \alpha<\beta<\alpha+\pi,
 \qquad\text{or equivalently}\qquad
 \beta-\pi<\alpha<\beta.
\end{equation}
Consequently, using $f_\alpha(s)^2=\psi(s)^2\sin^2(\beta-\alpha)$ and then the
substitution $t=\beta-\alpha$, we obtain
\begin{align}
 R_1
 &=\pi^2\int_0^{2\pi}\psi(s)^2
 \left(
 \int_{\beta-\pi}^{\beta}
 \frac{\sin^2(\beta-\alpha)}{\ell(\alpha)^2}\dd\alpha
 \right)\dd s\notag\\
 &=\pi^2\int_0^{2\pi}\psi(s)^2
 \left(
 \int_0^\pi\frac{\sin^2t}{\ell(\beta-t)^2}\dd t
 \right)\dd s\notag\\
 &=\pi^2\int_0^{2\pi}J(\phi(s))\psi(s)^2\dd s.
\notag
\end{align}

The contribution of the complementary arc is
\[
 R_2:=\pi^2\int_0^{2\pi}\frac1{\ell(\alpha+\pi)^2}
 \int_{I_{\alpha+\pi}}f_\alpha(s)^2\dd s\dd\alpha.
\]
Set $\alpha'=\alpha+\pi$.  Since $I_{\alpha+\pi}=I_{\alpha'}$,
$\ell(\alpha+\pi)=\ell(\alpha')$, and
\[
 f_{\alpha'-\pi}(s)^2
 =\psi(s)^2\sin^2(\phi(s)-\alpha'+\pi)
 =\psi(s)^2\sin^2(\phi(s)-\alpha')
 =f_{\alpha'}(s)^2,
\]
periodicity in $\alpha'$ shows that $R_2=R_1$.  Hence the right side of
\eqref{eq:two-arc-poincare} equals
\[
 2\pi^2\int_0^{2\pi}J(\phi(s))\psi(s)^2\dd s.
\]
Combining this identity with \eqref{eq:average-left}, dividing by $\pi$, and
using \eqref{eq:Y-energy} gives \eqref{eq:averaged-two-arc} for real $\psi$.
For complex $\psi$, we apply the real-valued result to $\operatorname{Re}\psi$ and
$\operatorname{Im}\psi$ and add the two inequalities.
\end{proof}


\begin{lemma}\label{lem:J-lower}
For every $\beta\in\R$,
\begin{equation}\label{eq:J-lower}
 J(\beta)\geq\frac{\cstar}{2\pi}.
\end{equation}
\end{lemma}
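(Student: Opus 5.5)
The plan is to combine the convolution identity of Lemma~\ref{lem:closure-convolution} with a single application of H\"older's inequality. Fix $\beta$. Since $g>0$, we have $0<\ell<2\pi$ everywhere, so all quantities below are finite and positive. Set
\[
 A:=J(\beta)=\int_0^\pi\frac{\sin^2t}{\ell(\beta-t)^2}\dd t,
 \qquad
 B:=\int_0^\pi\ell(\beta-t)\sin t\dd t .
\]
By \eqref{eq:closure-convolution}, $B=2\pi$ for every $\beta$. The idea is to find a power of $\sin t$ that factors as a product of powers of the two integrands, with the two exponents adding to one. This lets H\"older's inequality bound $\int\sin^p$ by $A$ and $B$, with the dependence on $\ell$ cancelling.

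Concretely, for $t\in(0,\pi)$ I use the pointwise factorization
\[
 \sin^{4/3}t=\Bigl(\frac{\sin^2t}{\ell(\beta-t)^2}\Bigr)^{1/3}\bigl(\ell(\beta-t)\sin t\bigr)^{2/3}.
\]
The powers of $\ell$ cancel because $-\tfrac23+\tfrac23=0$, and the powers of $\sin t$ combine as $\tfrac23+\tfrac23=\tfrac43$. Applying H\"older with exponents $3$ and $3/2$ gives
\[
 \int_0^\pi\sin^{4/3}t\dd t\leq A^{1/3}B^{2/3}=A^{1/3}(2\pi)^{2/3},
\]
and hence
\[
 J(\beta)\geq\frac{1}{(2\pi)^2}\Bigl(\int_0^\pi\sin^{4/3}t\dd t\Bigr)^3.
\]

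It remains to evaluate the Wallis-type integral with the Beta function. We have
\[
 \int_0^\pi\sin^{p}t\dd t=B\bigl(\tfrac12,\tfrac{p+1}2\bigr)=\sqrt{\pi}\,\frac{\Gamma(\frac{p+1}{2})}{\Gamma(\frac p2+1)},
\]
so for $p=4/3$ this equals $\sqrt\pi\,\Gamma(7/6)/\Gamma(5/3)$. Cubing and dividing by $4\pi^2$ gives
\[
 J(\beta)\geq\frac{\pi^{3/2}}{4\pi^2}\Bigl(\frac{\Gamma(7/6)}{\Gamma(5/3)}\Bigr)^3=\frac{1}{2\pi}\cdot\frac{\sqrt\pi}{2}\Bigl(\frac{\Gamma(7/6)}{\Gamma(5/3)}\Bigr)^3=\frac{\cstar}{2\pi},
\]
which is \eqref{eq:J-lower}.

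The only real choice in the argument is the exponent bookkeeping in the H\"older step, namely picking weights so that $\ell$ cancels exactly. Once that is fixed, the rest is routine. Equality would require $\ell(\beta-t)^3\propto\sin t$ on $(0,\pi)$, which is incompatible with \eqref{eq:ell-antipodal}. This is the H\"older loss mentioned in the introduction.
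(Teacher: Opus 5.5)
Your proof is correct and is the paper's argument: the same factorization of $\sin^{4/3}t$, H\"older with exponents $3$ and $3/2$, the identity $\int_0^\pi\ell(\beta-t)\sin t\dd t=2\pi$ from Lemma~\ref{lem:closure-convolution}, and the Beta-function evaluation $\int_0^\pi\sin^{4/3}t\dd t=\sqrt{\pi}\,\Gamma(7/6)/\Gamma(5/3)$. One cosmetic point: you use the letter $B$ both for the convolution integral and for the Beta function, so rename one of them.
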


\begin{proof}
Let
\begin{equation}\label{eq:A-def}
 A:=\int_0^\pi\sin^{4/3}t\dd t.
\end{equation}
With
\[
 \sin^{4/3}t
 =\left(\frac{\sin^2t}{\ell(\beta-t)^2}\right)^{1/3}
 \left(\ell(\beta-t)\sin t\right)^{2/3}
\]
H\"older's inequality (with exponents $3$ and $3/2$) gives
\begin{align}
 A
 &\leq
 \left(\int_0^\pi\frac{\sin^2t}{\ell(\beta-t)^2}\dd t\right)^{1/3}
 \left(\int_0^\pi\ell(\beta-t)\sin t\dd t\right)^{2/3}\notag\\
 &=J(\beta)^{1/3}(2\pi)^{2/3},
 \label{eq:Holder-J}
\end{align}
where Lemma \ref{lem:closure-convolution} was used in the last step. Therefore
\[
 J(\beta)\geq\frac{A^3}{(2\pi)^2}=\frac{\cstar}{2\pi}.
\]

The beta-function identity
\begin{equation*}
 A=\sqrt{\pi}\,\frac{\Gamma(7/6)}{\Gamma(5/3)}
\end{equation*}
turns $\cstar$ into the expression in \eqref{eq:cstar-def}.

\end{proof}


\begin{theorem}\label{thm:convex-oval}
Let $\gamma$ be a smooth, strictly convex, closed plane curve of length $2\pi$ with everywhere positive curvature. Then, for every $\psi\in H^1(\T_{2\pi})$,
\begin{equation}\label{eq:convex-oval}
 \int_0^{2\pi}\bigl(|\psi'|^2+\kappa^2|\psi|^2\bigr)\dd s
 \geq\cstar\int_0^{2\pi}|\psi|^2\dd s.
\end{equation}
In particular, $\lambda_\gamma\geq\cstar$.
\end{theorem}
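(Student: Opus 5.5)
The proof is a direct combination of Proposition \ref{prop:averaged-two-arc} and Lemma \ref{lem:J-lower}. Let $\psi\in H^1(\T_{2\pi})$, real or complex; the proposition already covers both cases.

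First, apply Proposition \ref{prop:averaged-two-arc}. This bounds the quadratic form $\int_0^{2\pi}(|\psi'|^2+\kappa^2|\psi|^2)\dd s$ from below by $2\pi\int_0^{2\pi}J(\phi(s))|\psi(s)|^2\dd s$. The hypotheses of that proposition are exactly those of the present theorem: a smooth, strictly convex closed curve of length $2\pi$ with positive curvature, so that $\phi$ is a strictly increasing lift with turning number one.

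Second, bound the weight pointwise. Lemma \ref{lem:J-lower} gives $J(\beta)\geq \cstar/(2\pi)$ for every real $\beta$, and in particular for $\beta=\phi(s)$ at every $s$. That lemma rests on Lemma \ref{lem:closure-convolution}, so we check its hypotheses for $g=1/(\kappa\circ\sigma)$:
\begin{itemize}
\item $g$ is positive and integrable, since $\kappa$ is continuous and positive on a compact circle;
\item it has total mass $2\pi$, by \eqref{eq:g-mean};
\item it satisfies the closure condition \eqref{eq:g-closure}, which holds because $\int_0^{2\pi}T\dd s=\gamma(2\pi)-\gamma(0)=0$.
\end{itemize}
Substituting the pointwise bound gives
\[
 2\pi\int_0^{2\pi}J(\phi(s))|\psi|^2\dd s\geq \cstar\int_0^{2\pi}|\psi|^2\dd s,
\]
which is \eqref{eq:convex-oval}.

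Finally, taking the infimum over $\psi\neq0$ in the Rayleigh quotient \eqref{eq:oval-eigenvalue} yields $\lambda_\gamma\geq\cstar$.

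There is no genuine obstacle here. The only point to watch is that $\ell$ stays bounded away from zero, so that $J$ is finite and measurable and the Fubini steps used earlier are legitimate. This holds because $g\geq 1/\max\kappa>0$, which gives $\ell(\alpha)\geq \pi/\max\kappa$. Even if $J$ were infinite somewhere, the inequality would remain valid in the extended sense.
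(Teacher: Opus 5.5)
Your proposal is correct and matches the paper's proof, which simply inserts the pointwise bound \eqref{eq:J-lower} into \eqref{eq:averaged-two-arc}. Your extra checks are correct and make explicit what the paper leaves implicit: that $g=1/(\kappa\circ\sigma)$ satisfies the hypotheses of Lemma~\ref{lem:closure-convolution}, and that $\ell\geq\pi/\max\kappa$ keeps $J$ finite.
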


\begin{proof}
Insert \eqref{eq:J-lower} into \eqref{eq:averaged-two-arc}.
\end{proof}

\begin{remark}\label{rem:equality-family}
If $\phi(s+\pi)=\phi(s)+\pi$, then $\ell\equiv\pi$, hence
$J\equiv(2\pi)^{-1}$, and Proposition~\ref{prop:averaged-two-arc} returns the
sharp value $\lambda_\gamma\geq1$.  This applies to the circle. It also applies to every member of the family $\mathcal O$: for such a curve the ground-state
transform has the form $Y(s)=Ae(s)$, where $e(s)=(\cos s,\sin s)$ and $A$ is an
invertible $2\times2$ matrix \cite{BenguriaLoss2004,BernsteinMettler2015}.  Thus $Y(s+\pi)=-Y(s)$; since
$|Y(s+\pi)|=|Y(s)|$, one obtains $T(s+\pi)=-T(s)$ and hence
$\phi(s+\pi)=\phi(s)+\pi$.  In the measure-valued limiting description of the
di-gon, $g=\pi\delta_0+\pi\delta_\pi$ likewise gives $\ell\equiv\pi$.  Thus the
projection-averaging stage recovers the value $1$ on the entire known equality
family. That is,  the argument is
therefore exact on the entire known equality set of the Ovals problem.
\end{remark}

 
\begin{remark}\label{rem:loss}
The chain of the proof is
\[
 \lambda_\gamma
 \;\geq\;2\pi\min_\beta J(\beta)
 \;\geq\;\cstar ,
\]
the first inequality being Proposition \ref{prop:averaged-two-arc} and the second
Lemma \ref{lem:J-lower}.  Indeed it is a similar proof structure we followed for eigenvalue lower bounds in our recent paper \cite{FrankLaptevS2026}.

The angular averaging is exact, since
$\int_0^{2\pi}|Y'(s)\cdot\nu_\alpha|^2\mathrm d\alpha=\pi |Y'(s)|^2,$
so no component of $Y'$ is lost at that stage. The losses in our argument arise from the two-arc Poincar\'e inequalities and the subsequent H\"older estimate.
 These have consequences for how far the present method can be pushed.  The sharp lower bound for the Ovals problem remains open.
\end{remark}

\section{Proofs of Theorem \ref{thm:relaxed-main} and Corollary \ref{cor:curve-main}}\label{sec:relaxed}

We now pass from convex planar curves to arbitrary relaxed loop. Define
\begin{equation}\label{eq:Aclass}
 \Aclass_m:=\left\{
 \begin{array}{l|l}
 \Psi\in H^1(\T_{2\pi};\R^m)\setminus\{0\}
 &\displaystyle
 \left|\int_0^{2\pi}\sgn\Psi\dd s\right|
 \leq |Z[\Psi]|,
 \end{array}
 \right\},  \; m\geq2.
\end{equation}
This is exactly the constraint appearing in the relaxed oval problem in harmonic
coordinates \cite[Formula (2.6)]{Denzler2015}, where it is called the weak loop
condition. Denzler introduced it in order to restore compactness for the Ovals
variational problem. We use the following consequence of his existence,
classification, planarity, and regularity theorems.

\begin{proposition}\label{prop:Denzler-reduction}
The variational quantity
\begin{equation}\label{eq:relaxed-inf}
 \mu_m:=\inf_{\Psi\in\Aclass_m}
 \frac{\displaystyle\int_0^{2\pi}|\Psi'|^2\dd s}
 {\displaystyle\int_0^{2\pi}|\Psi|^2\dd s}
\end{equation}
is attained, and $\mu_m\leq1$. Any minimizer is of one of the following two types:
\begin{enumerate}[label=\textup{(\roman*)}]
\item it is a di-gon configuration, that is, $Z[\Psi]$ is either a closed interval
of length exactly $\pi$ or a pair of points at distance $\pi$, and the associated
curve is a straight segment of length $\pi$ traversed back and forth; in this case
the Rayleigh quotient in \eqref{eq:relaxed-inf} equals $1$;
\item it is zero-free and, after an orthogonal change of coordinates in $\R^m$, has the form
\begin{equation}\label{eq:Denzler-form}
 \Psi(s)=\varphi(s)T(s),
 \qquad \varphi(s)>0,
\end{equation}
where $T$ is the unit tangent field of a planar, real-analytic, strictly convex closed curve of length $2\pi$ with strictly positive curvature.
\end{enumerate}
\end{proposition}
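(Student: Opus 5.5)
The plan is to assemble the statement from the results of \cite{Denzler2015}. Only the two elementary parts get a self-contained argument: the upper bound $\mu_m\leq1$ and the closedness of $\Aclass_m$ that drives existence.

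\textbf{Step 1: the bound $\mu_m\leq1$.} Take the unit circle of length $2\pi$, embedded in the first two coordinates of $\R^m$, and set $\Psi(s)=T(s)=(\cos s,\sin s,0,\dots,0)$. Then $Z[\Psi]=\emptyset$, $\sgn\Psi=T$, and
\[
\int_0^{2\pi}\sgn\Psi\dd s=\int_0^{2\pi}T\dd s=0,
\]
so the weak loop condition holds with equality $0\leq0$. Since $|\Psi'|=|\Psi|=1$, the Rayleigh quotient is $1$, and hence $\mu_m\leq1$.

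\textbf{Step 2: existence of a minimizer.} Take a minimizing sequence $\Psi_n\in\Aclass_m$ with $\|\Psi_n\|_{L^2}=1$. It is bounded in $H^1$, so after passing to a subsequence, $\Psi_n\rightharpoonup\Psi$ weakly in $H^1$ and $\Psi_n\to\Psi$ uniformly. The limit satisfies $\|\Psi\|_{L^2}=1$, so $\Psi\neq0$, and weak lower semicontinuity gives $\int|\Psi'|^2\leq\mu_m$. It remains to show that the constraint passes to the limit. Write $Z=Z[\Psi]$ and $Z_n=Z[\Psi_n]$.
\begin{itemize}
\item Off $Z$, uniform convergence gives $\sgn\Psi_n\to\sgn\Psi$ pointwise, so by dominated convergence $\int_{Z^c}\sgn\Psi_n\to\int\sgn\Psi$.
\item Since $\sgn\Psi_n=0$ on $Z_n$ and $|\sgn\Psi_n|\leq1$ elsewhere, we have $\bigl|\int_Z\sgn\Psi_n\bigr|\leq|Z\setminus Z_n|$.
\item Using the constraint for $\Psi_n$,
\[
\Bigl|\int_{Z^c}\sgn\Psi_n\Bigr|\leq|Z_n|+|Z\setminus Z_n|=|Z|+|Z_n\setminus Z|.
\]
\item Finally, $|Z_n\setminus Z|\to0$. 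On $Z^c$ we have $\Psi\neq0$, so uniform convergence forces $\mathbf 1_{Z_n\cap Z^c}\to0$ pointwise.
\end{itemize}
Letting $n\to\infty$ yields $\bigl|\int\sgn\Psi\bigr|\leq|Z|$, so $\Psi\in\Aclass_m$ and $\Psi$ is a minimizer.

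\textbf{Step 3: classification of minimizers.} This is the main obstacle, and I would not reprove it but invoke Denzler's classification, planarity and regularity theorems. The structure is a case split on whether the minimizer has zeros.
\begin{itemize}
\item If $Z[\Psi]\neq\emptyset$, Denzler's analysis of the Euler--Lagrange equation shows the configuration is the di-gon. The zero set is a closed interval of length $\pi$ or two points at distance $\pi$, the curve is a segment traversed back and forth, and the quotient equals $1$; this is alternative (i).
\item If $Z[\Psi]=\emptyset$, the constraint reads $\int\sgn\Psi=0$. Then $T:=\Psi/|\Psi|$ is the unit tangent of a closed $W^{2,2}$ curve of length $2\pi$, and $\Psi=\varphi T$ with $\varphi=|\Psi|>0$.
\end{itemize}
In the zero-free case, Denzler's theorems, via the Lagrange multiplier analysis of the constrained minimizer, show three further facts. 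The curve is planar, so after an orthogonal change of coordinates it lies in $\R^2$; it is real-analytic; and it has strictly positive curvature, hence is strictly convex with turning number one. This gives alternative (ii).

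I expect the delicate points in matching to \cite{Denzler2015} to be twofold. First, his normalization of the weak loop condition (his formula (2.6)) must be checked to coincide with $\Aclass_m$. Second, his exceptional case, the possible exception of the di-gon, must be checked to be precisely alternative (i) with quotient $1$.
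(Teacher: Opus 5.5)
Your proposal is correct, and for the classification it follows the paper's route exactly: both defer to Denzler. The difference lies in the existence part.

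\textbf{Existence and $\mu_m\leq1$.} The paper takes existence, the bound $\mu_m\leq1$, and the four-way description of the zero set all from Denzler's Theorem~3.1. You instead get $\mu_m\leq1$ from the circle, and you prove existence directly by showing that the weak loop condition is closed under uniform convergence. That argument is sound: the bound $\bigl|\int_{Z^c}\sgn\Psi_n\bigr|\leq|Z|+|Z_n\setminus Z|$, together with $|Z_n\setminus Z|\to0$, is exactly what is needed. It also makes transparent why the relaxed constraint restores compactness.

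\textbf{What your citation hides.} The paper's more granular citation exposes the one substantial input hidden in your sentence ``if $Z[\Psi]\neq\emptyset$, Denzler shows it is the di-gon.'' Denzler's Theorem~3.1 also allows a $D$-shaped zero set, a closed interval of length $<\pi$. Excluding it requires his separate Theorem~3.4.

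\textbf{The value $1$ in case (i).} You do not need Denzler for this. In both di-gon subcases $Z[\Psi]$ contains two points at distance $\pi$, so $\Psi\in H_0^1$ on each complementary arc of length $\pi$. The Dirichlet Poincar\'e inequality then gives quotient at least $1$, and your Step~1 together with minimality gives at most $1$. This is essentially the paper's argument.

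\textbf{An invalid inference in Step~3.} Strictly positive curvature does not imply strict convexity or turning number one. A circle of radius $1/2$ traversed twice has length $2\pi$, $\kappa\equiv2$, and turning number $2$. So strict convexity, which Section~\ref{sec:geometry} uses to normalize $\nu=1$, must be quoted directly from Denzler's Theorem~3.3 rather than deduced from $\kappa>0$.
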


\begin{proof}
The variational problem \eqref{eq:relaxed-inf} is Denzler's relaxed oval problem
in harmonic coordinates \cite[formula (2.6)]{Denzler2015}: the functional is
$\int_0^{2\pi}|\Psi'|^2$ under the $L^2$ normalization and the weak loop
condition, the normalization being immaterial by homogeneity.  By
\cite[Theorem 3.1]{Denzler2015} the infimum is attained by some $\Psi$ with
Rayleigh quotient at most $1$, and the zero set $Z[\Psi]$ falls into one of four
cases: $Z[\Psi]=\varnothing$; $Z[\Psi]$ a closed interval of length $<\pi$
(``$D$-shaped''); $Z[\Psi]$ a closed interval of length exactly $\pi$; or
$Z[\Psi]$ a pair of points at distance $\pi$.  The last two are the di-gon and
give case (i).  The $D$-shaped case does not occur, by
\cite[Theorem 3.4]{Denzler2015}.  In the remaining case $Z[\Psi]=\varnothing$,
\cite[Theorem 3.3]{Denzler2015} states that $\Psi$ represents a planar, strictly
convex, real-analytic curve with strictly positive curvature; writing
$\varphi:=|\Psi|>0$ and $T:=\Psi/|\Psi|$ gives \eqref{eq:Denzler-form}, and
$\gamma(s):=\int_0^sT$ is closed of length $2\pi$ because the weak loop condition
reads $\int_0^{2\pi}T\dd s=0$ when $|Z[\Psi]|=0$.  This is case (ii).  See also
\cite[Theorem 1.1]{Denzler2015} for the combined statement.

Finally, in case (i) the Rayleigh quotient equals $1$: writing $\Psi=\varphi T$
with $T$ constant equal to $\pm e_1$ on each of the two arcs of length $\pi$, one
has $\int_0^{2\pi}|\Psi'|^2=\int_0^{2\pi}|\varphi'|^2$ with $\varphi\in H_0^1$ on
each arc, so the sharp Poincar\'e inequality \eqref{eq:Dirichlet-Poincare} with
$L=\pi$ gives $\int|\Psi'|^2\geq\int|\Psi|^2$, with equality for
$\varphi=|\sin s|$.
\end{proof}

\begin{proof}[Proof of Theorem \ref{thm:relaxed-main}]
By homogeneity, it is enough to prove that $\mu_m\geq\cstar$. Let $\Psi$ be a minimizer from Proposition \ref{prop:Denzler-reduction}. In the di-gon case, its quotient is $1>\cstar$. Otherwise, write $\Psi=\varphi T$ as in \eqref{eq:Denzler-form}. Since $T$ is a unit tangent, $|T'|=\kappa$ and $TT'=0$,
\begin{equation}\label{eq:Denzler-energy-decomp}
 |\Psi'|^2=|\varphi'|^2+\kappa^2\varphi^2.
\end{equation}
The associated curve is smooth, planar, and strictly convex, so Theorem \ref{thm:convex-oval} yields
\[
 \int_0^{2\pi}|\Psi'|^2\dd s
 \geq\cstar\int_0^{2\pi}|\Psi|^2\dd s.
\]
Thus $\mu_m\geq\cstar$, and the same inequality holds for every element of $\Aclass_m$.
\end{proof}

\begin{proof}[Proof of Corollary \ref{cor:curve-main}]
Let $\psi$ be a ground state for the operator in \eqref{eq:oval-eigenvalue}; it may be chosen strictly positive. Define
\[
 \Psi(s):=\psi(s)\gamma'(s).
\]
Then $Z[\Psi]=\varnothing$ and
\[
 \int_0^{2\pi}\sgn\Psi(s)\dd s
 =\int_0^{2\pi}\gamma'(s)\dd s=0,
\]
so $\Psi\in\Aclass_m$. Moreover, $|\gamma'|=1$ implies
$\gamma'\cdot\gamma''=0$ almost everywhere, and 
\[
 |\Psi'|^2=|\psi'|^2+\kappa^2\psi^2.
\]
Applying Theorem \ref{thm:relaxed-main} gives \eqref{eq:curve-main}. This proves the claim for $W^{2,2}$ curves.
\end{proof}

\section{Proofs of Theorem \ref{thm:two-state-main} and Corollary \ref{cor:two-eigenvalue}}\label{sec:bloch}

Let
\begin{equation}\label{eq:u-vector}
 u(x):=\begin{pmatrix}u_1(x)\\u_2(x)\end{pmatrix}\in\C^2,
 \quad
 \rho(x):=|u(x)|^2=|u_1(x)|^2+|u_2(x)|^2.
\end{equation}
We use the convention that the Hermitian inner product is linear in its second argument. Introduce the Pauli matrices
\begin{equation}\label{eq:Pauli}
 \sigma_1=\begin{pmatrix}0&1\\1&0\end{pmatrix},
 \qquad
 \sigma_2=\begin{pmatrix}0&-i\\i&0\end{pmatrix},
 \qquad
 \sigma_3=\begin{pmatrix}1&0\\0&-1\end{pmatrix},
\end{equation}
and define the Bloch vector by
\begin{equation}\label{eq:bloch-def}
 b_k(x):=\langle u(x),\sigma_k u(x)\rangle_{\C^2},
 \qquad k=1,2,3.
\end{equation}
This agrees with \eqref{eq:bloch-intro}.

\begin{lemma}\label{lem:bloch-identities}
For almost every $x\in\R$,
\begin{align}
 |\mathbf b(x)|^2&=\rho(x)^2,
 \label{eq:bloch-length}\\
 |\mathbf b'(x)|^2
 &=4\rho(x)|u'(x)|^2
 -4\bigl(\operatorname{Im}\langle u(x),u'(x)\rangle_{\C^2}\bigr)^2
 \leq4\rho(x)|u'(x)|^2.
 \label{eq:bloch-differential}
\end{align}
If $u_1,u_2$ are orthonormal, then
\begin{equation}\label{eq:bloch-mean-zero}
 \int_{\R}\mathbf b(x)\dd x=0.
\end{equation}
\end{lemma}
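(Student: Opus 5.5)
The proof is a pointwise computation with Pauli matrices, followed by one integration using orthonormality. Write $P:=u u^{*}$, the $2\times2$ Hermitian rank-one matrix. Since $\{I,\sigma_1,\sigma_2,\sigma_3\}$ is an orthogonal basis of Hermitian $2\times2$ matrices for the trace inner product, we have
\[
 P=\tfrac12\bigl(\rho I+\textstyle\sum_k b_k\sigma_k\bigr),
 \qquad b_k=\operatorname{tr}(P\sigma_k).
\]

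For \eqref{eq:bloch-length}, use $P^2=\rho P$. Compute $P^2$ with $\sigma_j\sigma_k+\sigma_k\sigma_j=2\delta_{jk}I$ and compare identity components:
\[
 \tfrac14\bigl(\rho^2+|\mathbf b|^2\bigr)=\tfrac12\rho^2,
\]
hence $|\mathbf b|=\rho$. Alternatively, check directly from \eqref{eq:bloch-intro} that
\[
 4|u_1|^2|u_2|^2+(|u_1|^2-|u_2|^2)^2=\rho^2 .
\]

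For \eqref{eq:bloch-differential}, $u\in H^1$ gives $u$ bounded and continuous, so $b_k$ is absolutely continuous and
\[
 b_k'=2\operatorname{Re}\langle u,\sigma_k u'\rangle .
\]
Thus $\mathbf b'$ is the Pauli vector of $P'=u'u^*+u u'^*$. By the Pauli expansion, $\operatorname{tr}(X^2)=\tfrac12(a^2+|\mathbf c|^2)$ for $X=\tfrac12(aI+\mathbf c\cdot\boldsymbol\sigma)$. Hence
\[
 |\mathbf b'|^2=2\operatorname{tr}(P'^2)-(\operatorname{tr}P')^2 ,
 \qquad \operatorname{tr}P'=\rho'=2\operatorname{Re}\langle u,u'\rangle .
\]
Expanding the trace gives
\[
 \operatorname{tr}(P'^2)=2\rho|u'|^2+2\operatorname{Re}\bigl(\langle u,u'\rangle^2\bigr).
\]
Therefore, with $z=\langle u,u'\rangle$,
\[
 |\mathbf b'|^2=4\rho|u'|^2+4\operatorname{Re}(z^2)-4(\operatorname{Re}z)^2=4\rho|u'|^2-4(\operatorname{Im}z)^2 .
\]
The inequality follows immediately. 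This expansion, with its sign and conjugation bookkeeping, is the only delicate step. I would cross-check it on $u=(e^{ix}a,0)$, where $\mathbf b'$ vanishes while $4\rho|u'|^2=4a^4=4(\operatorname{Im}z)^2$.

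For \eqref{eq:bloch-mean-zero}, integrate componentwise. The integrals converge since $|b_k|\le\rho\in L^1$. We get
\[
 \int b_1+i\int b_2=2\int\overline{u_1}u_2=2\langle u_1,u_2\rangle_{L^2}=0,
 \qquad
 \int b_3=\|u_1\|_2^2-\|u_2\|_2^2=0 .
\]
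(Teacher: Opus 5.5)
Your proof is correct and is essentially the paper's argument. The paper evaluates $\sum_k|\langle u,\sigma_k u'\rangle|^2=2\rho|u'|^2-|\langle u,u'\rangle|^2$ and $\sum_k\langle u,\sigma_k u'\rangle^2=\langle u,u'\rangle^2$ through the index form of the Pauli completeness relation. That relation is the same orthogonality of $\{I,\sigma_1,\sigma_2,\sigma_3\}$ under the trace pairing that you use via $P=uu^{*}$, $P^2=\rho P$ and $|\mathbf b'|^2=2\operatorname{tr}(P'^2)-(\operatorname{tr}P')^2$, so your density-matrix version is a basis-free repackaging of the same computation, and your mean-zero step coincides with the paper's.
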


\begin{proof}
The Pauli matrices satisfy the completeness relation
\begin{equation}\label{eq:Pauli-completeness}
 \sum_{k=1}^3(\sigma_k)_{ij}(\sigma_k)_{\ell r}
 =2\delta_{ir}\delta_{j\ell}-\delta_{ij}\delta_{\ell r}.
\end{equation}
Applying \eqref{eq:Pauli-completeness} to $u$ gives \eqref{eq:bloch-length}. Differentiating $b_k$ yields
\[
 b_k'=2\operatorname{Re}\langle u,\sigma_k u'\rangle.
\]
A second use of \eqref{eq:Pauli-completeness} gives
\[
 \sum_{k=1}^3|\langle u,\sigma_k u'\rangle|^2
 =2\rho|u'|^2-|\langle u,u'\rangle|^2.
\]
Moreover,
\[
 \sum_{k=1}^3\langle u,\sigma_k u'\rangle^2
 =\langle u,u'\rangle^2.
\]
Consequently,
\begin{align*}
 |\mathbf b'|^2
 &=2\sum_{k=1}^3|\langle u,\sigma_k u'\rangle|^2
 +2\operatorname{Re}\sum_{k=1}^3\langle u,\sigma_k u'\rangle^2\\
 &=4\rho|u'|^2-4\bigl(\operatorname{Im}\langle u,u'\rangle\bigr)^2,
\end{align*}
which is \eqref{eq:bloch-differential}. Thus, we arrive at
\begin{align*}
 \int_{\R}b_1\dd x&=2\operatorname{Re}\langle u_1,u_2\rangle=0,\\
 \int_{\R}b_2\dd x&=2\operatorname{Im}\langle u_1,u_2\rangle=0,\\
 \int_{\R}b_3\dd x&=\|u_1\|_2^2-\|u_2\|_2^2=0,
\end{align*}
proving \eqref{eq:bloch-mean-zero}.
\end{proof}

The products defining $\rho$ and $\mathbf b$ belong to $W^{1,1}(\R)$.
Moreover, \eqref{eq:bloch-differential} implies
$\int_{\R}|\mathbf b'|^2/\rho<\infty$, with the integrand set equal to zero on
$\{\rho=0\}$.  Thus the hypotheses of Lemma~\ref{lem:mass-coordinate} are
satisfied. 
Since $u_1,u_2$ are orthonormal, we have 
$$
 \int_{\R}\rho(x)\dd x=2.
$$
Define the cumulative-density coordinate
$$
 t=t(x):=\frac12\int_{-\infty}^x\rho(y)\dd y,
 \qquad 0<t<1,
$$
and let $x=x(t)$ denote its generalized inverse. Put
\begin{equation}\label{eq:B-def}
 \mathbf B(t):=\mathbf b(x(t)).
\end{equation}
 Lemma \ref{lem:mass-coordinate} gives
\begin{align}
 \int_0^1|\mathbf B'(t)|^2\dd t
 &=2\int_{\R}\frac{|\mathbf b'(x)|^2}{\rho(x)}\dd x
 \leq8\int_{\R}|u'(x)|^2\dd x,
 \label{eq:B-energy}\\
 \int_0^1|\mathbf B(t)|^2\dd t
 &=\frac12\int_{\R}\rho(x)^3\dd x.
 \label{eq:B-L2}
\end{align}
Moreover, by \eqref{eq:bloch-length},
\begin{equation}\label{eq:B-sign}
 \sgn\mathbf B(t)=\frac{\mathbf b(x(t))}{\rho(x(t))}
\end{equation}
for almost every $t$, and hence
\begin{equation}\label{eq:B-loop}
 \int_0^1\sgn\mathbf B(t)\dd t
 =\frac12\int_{\R}\mathbf b(x)\dd x=0.
\end{equation}
The endpoint traces of $\mathbf B$ vanish, since $|\mathbf b|=\rho$ and
one-dimensional $H^1$ functions tend to zero at infinity; hence the periodic
extension of $\mathbf B$ is continuous and $\mathbf B\in H^1(\T_1;\R^3)$.  By
\eqref{eq:B-loop} it satisfies the weak loop condition \eqref{eq:weak-loop-intro}
in the strengthened form $\int_0^1\sgn\mathbf B\dd t=0$, irrespective of the size
of $Z[\mathbf B]$.

\begin{proof}[Proof of Theorem \ref{thm:two-state-main}]
Rescale $\mathbf B$ to a loop of length $2\pi$ by setting
\begin{equation}\label{eq:Psi-rescale}
 \Psi(s):=\mathbf B\!\left(\frac{s}{2\pi}\right).
\end{equation}
By \eqref{eq:B-loop}, $\Psi$ is admissible in Theorem \ref{thm:relaxed-main}. Therefore
\begin{equation}\label{eq:B-relaxed-scaled}
 \int_0^1|\mathbf B'|^2\dd t
 \geq4\pi^2\cstar\int_0^1|\mathbf B|^2\dd t.
\end{equation}
Combining \eqref{eq:B-energy}, \eqref{eq:B-L2}, and \eqref{eq:B-relaxed-scaled} gives
\[
 8\int_{\R}|u'|^2\dd x
 \geq4\pi^2\cstar\cdot\frac12\int_{\R}\rho^3\dd x.
\]
Since $|u'|^2=|u_1'|^2+|u_2'|^2$, this gives \eqref{eq:two-state-main}.
\end{proof}

Now we are in the position to prove the dual two-eigenvalue bound.
Let
$$
 a:=\cstar\frac{\pi^2}{4}.
$$
For $v,r\geq0$, a direct calculation gives
\begin{equation}\label{eq:young-cubic}
 vr-ar^3
 \leq\frac{2}{3\sqrt{3a}}v^{3/2}
 =\frac{4}{3\sqrt{3}\,\pi\sqrt{\cstar}}v^{3/2}.
\end{equation}
Indeed, the maximum is attained at $r=(v/(3a))^{1/2}$.

\begin{proof}[Proof of Corollary \ref{cor:two-eigenvalue}]
The (postitive-part) Ky Fan variational principle gives
\begin{equation}\label{eq:Ky-Fan-two}
 \lambda_1(V)+\lambda_2(V)
 =\sup_{\substack{u_1,u_2\in H^1(\R;\C)\\
 \langle u_i,u_j\rangle=\delta_{ij}}}
 \left\{\int_{\R}V\rho\dd x
 -\sum_{j=1}^2\int_{\R}|u_j'|^2\dd x\right\}.
\end{equation}

By Theorem~\ref{thm:two-state-main}, the expression in braces is bounded above by
\[
 \int_{\R}\bigl(V(x)\rho(x)-a\rho(x)^3\bigr)\dd x.
\]
Applying \eqref{eq:young-cubic} pointwise and integrating proves
\eqref{eq:two-eigenvalue}.
\end{proof}

\appendix

\section{Weighted cumulative-density coordinate}\label{sec:mass-appendix}

For completeness, we record the generalized-inverse change of variables used in Section \ref{sec:bloch}. The following statement 
 is formulated in a form adapted to the Bloch vector.

\begin{lemma}\label{lem:mass-coordinate}
Let $M>0$, let $\rho\in W^{1,1}_{\mathrm{loc}}(\R)$ satisfy
\begin{equation}\label{eq:mass-lemma-rho}
 \rho\geq0,
 \qquad
 \int_{\R}\rho\dd x=M,
 \qquad
 \rho(x)\longrightarrow0\quad\text{as }|x|\to\infty,
\end{equation}
and let $b\in W^{1,1}_{\mathrm{loc}}(\R;\R^m)$ satisfy
\begin{equation}\label{eq:mass-lemma-b}
 |b(x)|=\rho(x)\quad\text{a.e.},
 \qquad
 \int_{\R}\frac{|b'(x)|^2}{\rho(x)}\dd x<\infty,
\end{equation}
where the integrand is set equal to zero on $\{\rho=0\}$. Define
\begin{equation}\label{eq:mass-lemma-t}
 t(x):=\frac1M\int_{-\infty}^x\rho(y)\dd y
\end{equation}
and let $x(t)$ be a generalized inverse. Then $B(t):=b(x(t))$ has a representative in $H_0^1(0,1;\R^m)$ and
\begin{align}
 \int_0^1|B'(t)|^2\dd t
 &=M\int_{\R}\frac{|b'(x)|^2}{\rho(x)}\dd x,
 \label{eq:mass-chain-energy}\\
 \int_0^1|B(t)|^2\dd t
 &=\frac1M\int_{\R}|b(x)|^2\rho(x)\dd x,
 \label{eq:mass-chain-L2}\\
 \int_0^1\sgn B(t)\dd t
 &=\frac1M\int_{\R}\sgn b(x)\rho(x)\dd x.
 \label{eq:mass-chain-sign}
\end{align}
\end{lemma}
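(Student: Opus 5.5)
We establish the three identities by a generalized-inverse change of variables, organized around the pushforward relation $t_\#(\rho\,\mathrm dx/M)=\mathrm dt$ on $(0,1)$.

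\emph{Setup.} The function $t$ is continuous and nondecreasing from $0$ to $1$. It is constant exactly on intervals where $\rho=0$ a.e., and since $\rho$ is continuous, $\rho\equiv0$ on those intervals. We take $x(t):=\inf\{x:t(x)\geq t\}$. Then $t(x(t))=t$, and for any nonnegative Borel $F$,
\[
\int_0^1 F(x(t))\dd t=\frac1M\int_\R F(x)\rho(x)\dd x .
\]
This holds because the image of Lebesgue measure under $x(\cdot)$ is $\rho\,\mathrm dx/M$: the set $\{t:x(t)\leq y\}$ has measure $t(y)$. Applying this with $F=|b|^2$ gives \eqref{eq:mass-chain-L2}. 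Applying it componentwise with $F=\sgn b$ gives \eqref{eq:mass-chain-sign}, which is integrable since $|\sgn b|\leq1$.

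\emph{Regularity and energy identity.} First we show that $B$ is absolutely continuous. On $\{\rho>0\}$, an open set, $t$ is $C^1$ with $t'=\rho/M>0$, so $x(\cdot)$ is locally $C^1$ there with $x'(t)=M/\rho(x(t))$. Hence $B'(t)=b'(x(t))\,M/\rho(x(t))$. For the global statement we estimate increments directly. For $t_1<t_2$ with $x_i=x(t_i)$,
\[
|B(t_2)-B(t_1)|\leq\int_{x_1}^{x_2}|b'|\dd x\leq\Bigl(\int_{x_1}^{x_2}\frac{|b'|^2}{\rho}\Bigr)^{1/2}\Bigl(\int_{x_1}^{x_2}\rho\Bigr)^{1/2}.
\]
This uses $b'=0$ a.e.\ on $\{\rho=0\}$, which follows because $|b|=\rho$ and $b\in W^{1,1}_{\mathrm{loc}}$: we have $b=0$ there, and the derivative of a Sobolev function vanishes a.e.\ on its zero set. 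The last factor equals $(M(t_2-t_1))^{1/2}$. The bound is therefore Hölder-$1/2$ with a constant controlled by the finite energy. Summing over disjoint intervals and applying Cauchy--Schwarz once more gives absolute continuity. Now $B'=0$ a.e.\ on $t(\{\rho=0\})$, which has measure zero by the pushforward identity. Using the formula for $B'$ on the complement and the pushforward relation with $F=|b'|^2M^2/\rho^2$, we obtain
\[
\int_0^1|B'|^2\dd t=\frac1M\int_{\{\rho>0\}}\frac{M^2|b'|^2}{\rho^2}\,\rho\dd x,
\]
which is \eqref{eq:mass-chain-energy}.

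\emph{Boundary values.} As $t\to0^+$ or $t\to1^-$, we have $x(t)\to\mp\infty$, or $x(t)$ tends to a point beyond which $\rho\equiv0$. In either case $|B(t)|=\rho(x(t))\to0$ by \eqref{eq:mass-lemma-rho} and continuity. Combined with $B'\in L^2$, this gives $B\in H_0^1(0,1;\R^m)$.

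The main obstacle is the absolute continuity of $B$ across plateaus of $t$, where $\rho$ vanishes and $x(\cdot)$ jumps, together with the requirement that no singular part of $B'$ arises there. The increment estimate above handles this. It relies on two facts: $b'=0$ a.e.\ on $\{\rho=0\}$, and $x(\cdot)$ skips the zero-intervals, so that $b$ takes the value $0$ at both ends of each jump.
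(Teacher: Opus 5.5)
Your proof is correct, but its technical route differs from the paper's.

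\textbf{The paper's route.} It first proves $\sqrt\rho\in H^1(\R)$ and $b\in W^{1,1}(\R)$. It then decomposes $\{\rho>0\}$ into its components $I_\nu$ and changes variables separately on each image interval $J_\nu$ using $x_\nu'=M/\rho$. It observes that $B$ vanishes at the ends of every $J_\nu$, and concludes in one sentence that the zero extensions glue to an element of $H_0^1(0,1;\R^m)$.

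\textbf{Your route.} You use the single global identity $x_\#(\mathrm{d}t)=\rho\,\mathrm{d}x/M$, obtained from $|\{t:x(t)\le y\}|=t(y)$. This gives \eqref{eq:mass-chain-L2} and \eqref{eq:mass-chain-sign} at once, and \eqref{eq:mass-chain-energy} as soon as $B'$ is identified on $t(\{\rho>0\})$. You replace the gluing step by the increment bound $|B(t_2)-B(t_1)|\le (M(t_2-t_1))^{1/2}\bigl(\int_{x_1}^{x_2}|b'|^2/\rho\bigr)^{1/2}$. This proves absolute continuity across the plateaus of $t$ directly and even gives a H\"older-$\tfrac12$ modulus.

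\textbf{What each buys.} Your argument is shorter. It does not need the preliminary $\sqrt\rho\in H^1$ step, since boundedness of $B$ follows from its continuity on $[0,1]$. It also states explicitly that $b'=0$ a.e.\ on $\{\rho=0\}$, which the paper's Cauchy--Schwarz bound on $\int_\R|b'|$ uses tacitly. The paper's componentwise picture, on the other hand, shows the structure of $B$ (an $H_0^1$ function on each $J_\nu$) more transparently.

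\textbf{One terse step.} You assert the nullity of $t(\{\rho=0\})$ too quickly. It does follow from the pushforward identity once you note that $s=t(y)$ with $\rho(y)=0$ forces $\rho(x(s))=0$, because $t$ is constant on $[x(s),y]$. Alternatively, it is the one-dimensional Sard lemma for the $C^1$ function $t$.
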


\begin{proof}
Since $|b|=\rho$, the chain rule for the Euclidean norm gives
\begin{equation}\label{eq:rho-prime-b-prime}
 |\rho'(x)|\leq |b'(x)|
 \qquad\text{for almost every }x.
\end{equation}

For $\varepsilon>0$, apply the Sobolev chain rule to
$F_\varepsilon(\rho):=(\rho+\varepsilon)^{1/2}-\varepsilon^{1/2}$. Since
$0\leq F_\varepsilon(\rho)\leq\sqrt\rho$ and $F_\varepsilon(\rho)\to\sqrt\rho$ in $L^2(\R)$, weak lower semicontinuity yields
\begin{equation}\label{eq:sqrt-rho-H1}
 \int_{\R}|(\sqrt\rho\,)'|^2\dd x
 \leq\frac14\int_{\R}\frac{|b'|^2}{\rho}\dd x<\infty.
\end{equation}
Together with $\int\rho=M$, this shows that $\sqrt\rho\in H^1(\R)$. In particular, $\rho$ is bounded and $\rho^3\in L^1(\R)$.

The Cauchy--Schwarz inequality gives
\[
 \int_\R|b'|\dd x
 \leq M^{1/2}\left(\int_\R\frac{|b'|^2}{\rho}\dd x\right)^{1/2},
\]
so $b\in W^{1,1}(\R;\R^m)$ and, by \eqref{eq:rho-prime-b-prime},
$\rho\in W^{1,1}(\R)$.

Let $\Omega:=\{x:\rho(x)>0\}$. Since $\rho$ has a continuous, locally absolutely continuous representative, write
\[
 \Omega=\bigcup_\nu I_\nu
\]
as a countable disjoint union of open intervals. The map $x\mapsto t(x)$ is absolutely continuous and strictly increasing on every $I_\nu$; denote its image by $J_\nu$ and its inverse by $x_\nu:J_\nu\to I_\nu$. The intervals $J_\nu$ are disjoint and cover $(0,1)$ up to a null set, because
\begin{equation}\label{eq:Jnu-length}
 |J_\nu|=\frac1M\int_{I_\nu}\rho(x)\dd x,
 \qquad
 \sum_\nu |J_\nu|=1.
\end{equation}
For almost every $t\in J_\nu$,
\begin{equation}\label{eq:inverse-derivative}
 x_\nu'(t)=\frac{M}{\rho(x_\nu(t))}.
\end{equation}
Define $B(t):=b(x_\nu(t))$ on $J_\nu$ and $B=0$ on the null complement of $\bigcup_\nu J_\nu$. The standard chain rule gives
\begin{equation}\label{eq:Bprime-chain}
 B'(t)=\frac{M b'(x_\nu(t))}{\rho(x_\nu(t))}
 \qquad\text{a.e.\ on }J_\nu.
\end{equation}
Therefore
\begin{align*}
 \sum_\nu\int_{J_\nu}|B'(t)|^2\dd t
 &=M\int_{\Omega}\frac{|b'(x)|^2}{\rho(x)}\dd x,\\
 \sum_\nu\int_{J_\nu}|B(t)|^2\dd t
 &=\frac1M\int_{\Omega}|b(x)|^2\rho(x)\dd x.
\end{align*}
At every finite endpoint of a component $I_\nu$, continuity and maximality of the component give $b=0$; at infinite endpoints this follows from $|b|=\rho\to0$. Hence $B|_{J_\nu}$ has zero endpoint traces. Its zero extension belongs to $H_0^1(J_\nu;\R^m)$, and summability of the displayed energies shows that the componentwise zero extensions glue to a function in $H_0^1(0,1;\R^m)$. This proves \eqref{eq:mass-chain-energy} and \eqref{eq:mass-chain-L2}. Hence applying the same componentwise change of variables to the bounded function $\sgn b$ gives \eqref{eq:mass-chain-sign}.
\end{proof}

\subsection*{Acknowledgements}
The author acknowledges the use of AI tools. All mathematical arguments and proofs in the final manuscript were checked and written by the author.

\subsection*{Funding}
This research is funded by Nazarbayev University under the FDCRGP 110326FD3205 (D.S.).



\end{document}